\documentclass[12pt,twoside]{amsart}
\usepackage[a4paper,margin=1in]{geometry}
\usepackage[T1]{fontenc}
\usepackage{lmodern}
\usepackage{microtype}
\usepackage{amsmath,amssymb,amsthm,mathtools,amscd}
\usepackage{booktabs,tabularx,array}
\usepackage{enumitem}
\usepackage[hidelinks]{hyperref}
\usepackage{cleveref}
\usepackage{mathrsfs}

\date{}
\allowdisplaybreaks[4] \footskip=15pt
\renewcommand{\uppercasenonmath}[1]{}

\numberwithin{equation}{section} \theoremstyle{plain}
\newtheorem{theorem}{Theorem}[section]
\newtheorem{lemma}[theorem]{Lemma}
\newtheorem{proposition}[theorem]{Proposition}
\newtheorem{corollary}[theorem]{Corollary}
\theoremstyle{definition}
\newtheorem{definition}[theorem]{Definition}
\newtheorem{example}[theorem]{Example}
\theoremstyle{remark}
\newtheorem{remark}[theorem]{Remark}
\newtheorem*{ack*}{ACKNOWLEDGEMENTS}

\newcommand{\Mod}{\operatorname{Mod}}
\newcommand{\Spec}{\operatorname{Spec}}
\newcommand{\Max}{\operatorname{Max}}
\newcommand{\rad}{\operatorname{rad}}
\newcommand{\Soc}{\operatorname{Soc}}
\newcommand{\Top}{\operatorname{Top}}
\newcommand{\End}{\operatorname{End}}
\newcommand{\Coker}{\operatorname{Coker}}
\newcommand{\Ker}{\operatorname{Ker}}
\newcommand{\im}{\operatorname{im}}

\newcommand{\Ext}{\operatorname{Ext}}
\newcommand{\Hom}{\operatorname{Hom}}

\newcommand{\op}{\mathrm{op}}

\newcommand{\smallin}{\ll}

\begin{document}
\begin{center}
{\large  \bf Duality Between Injective Envelopes and Flat Covers over Noether Algebras}
	
	\vspace{0.5cm}  Xiaolei Zhang\\

	{\footnotesize  School of Mathematics and Statistics,  Tianshui Normal University,  Tianshui 741001, China

	E-mail addresses: zxlrghj@163.com\\}
\end{center}

\bigskip
\centerline { \bf  Abstract}
\bigskip
\leftskip10truemm \rightskip10truemm \noindent

It follows by  Puuska  that, over a commutative Noetherian ring $R$, a morphism $i : M \rightarrow I$ of $R$-modules is an
injective envelope if and only if its Matlis dual $\Hom_R(i, E)$ is a flat cover for some injective
cogenerator $E$, and equivalently for every injective $R$-module $E$. In this paper, we will generalize this result to non-commutative Noetherian algebras. \\
\vbox to 0.3cm{}\\
{\it Key Words:} Noether algebra; injective envelope; flat cover; flat cotorsion module; Matlis duality.\\
{\it 2020 Mathematics Subject Classification:} 16D40.

\leftskip0truemm \rightskip0truemm
\bigskip

\section{Introduction}
Let $R$ be a commutative noetherian ring. Matlis's decomposition theory for injective modules \cite{Matlis1958} and the subsequent theory of flat cotorsion modules initiated by Enochs \cite{Enochs1984} form the classical background for this paper. Puuska proved that a morphism $i:M\to I$ of $R$-modules is an injective envelope if and only if its Matlis dual $\Hom_R(i,E)$ is a flat cover for some injective cogenerator $E$, and equivalently for every injective $R$-module $E$; see \cite[Theorem~1.1]{Puuska2024}. The essential mechanism in that proof is local. Injective envelopes are detected by residue-field socles, flat covers of cotorsion modules are detected by the corresponding local top functors, and a tensor--Hom duality identifies the two tests. 

At the preenvelope/precover level there is a more general character-dual phenomenon over one-sided noetherian rings; see Enochs--Huang \cite{EnochsHuang2012}. This belongs to the broader character-module tradition originating with Lambek's flatness criterion \cite{Lambek1964}. Passing from precovers to covers, however, requires a minimality argument. The purpose here is therefore not merely to dualize exactness, but to prove that minimality is preserved in the required direction. In fact, we want to establish the same envelope--cover duality for a possibly noncommutative Noether algebra.

 Let $A$ be a Noether $R$-algebra. The appropriate duality is not an $A$-linear Hom functor, but the central dual
\[
D_E(-)=\Hom_R(-,E),
\]
which changes right $A$-modules into left $A$-modules. This is precisely the pointwise Matlis-duality framework used by Kanda and Nakamura in their structure theory of flat cotorsion modules over Noether algebras \cite{KandaNakamura2022}.

The noncommutative argument requires two replacements for the residue field $\kappa(\mathfrak p)$ occurring in the commutative proof. For $\mathfrak p\in\Spec R$, write
\[
A_{\mathfrak p}=A\otimes_R R_{\mathfrak p},\qquad
J_{\mathfrak p}=\rad A_{\mathfrak p},\qquad
\overline A_{\mathfrak p}=A_{\mathfrak p}/J_{\mathfrak p}.
\]
Kanda--Nakamura show that $\overline A_{\mathfrak p}$ is a finite-dimensional semisimple $\kappa(\mathfrak p)$-algebra. Thus the local socle of a right $A$-module $X$ is naturally measured by
\[
S_{\mathfrak p}(X)=\Hom_{A_{\mathfrak p}}(\overline A_{\mathfrak p},X_{\mathfrak p}),
\]
while the dual local top of a left $A$-module $Y$ is measured by
\[
T_{\mathfrak p}(Y)=\overline A_{\mathfrak p}\otimes_{A_{\mathfrak p}}\Hom_R(R_{\mathfrak p},Y).
\]
The first main technical point is the canonical natural isomorphism
\[
T_{\mathfrak p}(D_E X)\cong D_E(S_{\mathfrak p}(X)).
\]
The second is that the family $\{T_{\mathfrak p}\}_{\mathfrak p\in\Spec R}$ detects redundant direct summands in a flat cotorsion source. This is where the Kanda--Nakamura decomposition of flat cotorsion modules into local complete pieces enters.

The following main result is to generalize the Puuska's theorem to non-commutative Noether algebras utilizing Kanda--Nakamura decomposition of flat cotorsion modules: \begin{theorem}$($=Theorem 6.1$)$
	Let $R$ be a commutative noetherian ring, let $A$ be a Noether $R$-algebra, and let $i:M\to I$ be a morphism of right $A$-modules. The following conditions are equivalent:
	\begin{enumerate}[label=(\arabic*)]
		\item $i$ is an injective envelope in $\Mod A$;
		\item for some injective cogenerator $E$ of $\Mod R$, the morphism
		\[
		D_E(i)=\Hom_R(i,E):\Hom_R(I,E)\longrightarrow\Hom_R(M,E)
		\]
		is a flat cover in $\Mod A^{\op}$;
		\item for every injective $R$-module $E$, the morphism $D_E(i)$ is a flat cover in $\Mod A^{\op}$.
	\end{enumerate}
\end{theorem}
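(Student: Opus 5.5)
We will follow the structure of Puuska's commutative proof and feed in the two technical points announced in the introduction: the isomorphism $T_{\mathfrak p}(D_E X)\cong D_E(S_{\mathfrak p}(X))$, and the fact that the family $\{T_{\mathfrak p}\}$ detects redundant summands of flat cotorsion modules. Since (3)$\Rightarrow$(2) is trivial, it suffices to prove (1)$\Rightarrow$(3) and (2)$\Rightarrow$(1).

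\emph{Preliminaries.} For any injective $R$-module $E$ and any injective right $A$-module $I$, the left $A$-module $D_E(I)$ is flat. Indeed, by tensor--Hom adjunction $\Hom_R(I,E)\otimes_A -\cong \Hom_R(\Hom_A(-,I),E)$ on finitely presented modules, and this functor is exact because $A$ is noetherian. The module $D_E(I)$ is also cotorsion, being a dual $\Hom_R(-,E)$ (the Enochs--Huang/Lambek character argument). So $D_E(i)$ is a morphism from a flat cotorsion module. If $E$ is an injective cogenerator, then $i$ is a monomorphism if and only if $D_E(i)$ is an epimorphism. Moreover, for injective $E$ every morphism from a flat left module $F$ to $D_E(M)$ factors through $D_E(i)$ whenever $i$ is a monomorphism with $I$ injective. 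To see this, the adjunction gives $\Hom_{A^{\op}}(F,D_E M)\cong \Hom_R(F\otimes_A$-$)$-type data, or equivalently $\Hom_A(M,D_E F)$, and $D_E F$ is an injective right $A$-module (the dual of a flat module over a noetherian ring). Hence $D_E(i)$ is a flat precover. This recovers the Enochs--Huang precover statement.

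\emph{Minimality and the socle/top test.} A flat precover $\phi:F\to N$ with $F$ flat cotorsion is a cover if and only if $F$ has no nonzero direct summand contained in $\Ker\phi$. By the Kanda--Nakamura decomposition and the detection result stated earlier, this holds if and only if $T_{\mathfrak p}(\Ker\phi\hookrightarrow F)$ induces the zero map for every $\mathfrak p$. Equivalently, $T_{\mathfrak p}(\phi)$ is injective (hence an isomorphism onto its image) for all $\mathfrak p$; at this point I would use the paper's lemma in the precise form it is stated. Dually, a monomorphism $i:M\to I$ into an injective is an envelope if and only if $S_{\mathfrak p}(i)$ is an isomorphism for all $\mathfrak p$. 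This is the noncommutative analogue of essentiality being detected by the socles $\Hom_{A_{\mathfrak p}}(\overline A_{\mathfrak p},-)$ on the $\mathfrak p$-torsion parts. Applying the natural isomorphism to $i$, we get $T_{\mathfrak p}(D_E i)\cong D_E(S_{\mathfrak p}(i))$.

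\emph{The implications.} For (1)$\Rightarrow$(3): the maps $S_{\mathfrak p}(i)$ are isomorphisms, so $T_{\mathfrak p}(D_E i)$ are isomorphisms, so $D_E(i)$ is a cover. This works for any injective $E$, including $E=0$, where it is trivial. For (2)$\Rightarrow$(1): surjectivity of $D_E(i)$ and faithfulness of $D_E$ give that $i$ is a monomorphism. Write $I=I'\oplus I''$ with $I'$ an injective envelope of $M$. Then $D_E(I'')$ is a direct summand of $D_E(I)$ lying in $\Ker D_E(i)$, so by minimality $D_E(I'')=0$, and hence $I''=0$ because $E$ is a cogenerator. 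This argument avoids the socle criterion in this direction altogether.

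\emph{Main obstacle.} The main obstacle is (1)$\Rightarrow$(3): showing that minimality of the cover is controlled exactly by the family $T_{\mathfrak p}$. In particular, we need that a summand of $D_E(I)$ in the kernel yields a nonzero $T_{\mathfrak p}$-image for some $\mathfrak p$. Here the Kanda--Nakamura description of flat cotorsion modules as products of completions of $\overline{A}_{\mathfrak p}$-type local pieces is essential. A possibly simpler route is to observe that a nonzero summand $P\subseteq\Ker D_E(i)$ gives, via the adjunction, a nonzero map $I\to D_E P$ killing $M$. One would then use the essentiality of $M$ in $I$ together with the injectivity of the relevant local components to derive a contradiction. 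I would try this direct argument first, and fall back on the $T_{\mathfrak p}$ machinery if it fails.
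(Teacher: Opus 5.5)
Your plan for $(1)\Rightarrow(3)$ has the same architecture as the paper's: a flat precover with flat cotorsion source, $S_{\mathfrak p}(i)$ an isomorphism, the isomorphism $T_{\mathfrak p}(D_EX)\cong D_E(S_{\mathfrak p}(X))$, then a local minimality criterion. But the decisive step is never established. You assert the criterion, then call it the main obstacle and propose a direct argument that cannot work.

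The map $I\to D_E(P)$ vanishing on $M$, obtained from a nonzero $P\subseteq\Ker D_E(i)=D_E(I/M)$, only encodes a nonzero morphism $P\to D_E(I/M)$. It does not use that $P$ is a direct summand of $D_E(I)$. Essentiality of $M$ in $I$ does not prevent nonzero maps from $I$ to an injective module that vanish on $M$ (e.g.\ $\mathbb Z\subseteq\mathbb Q\to\mathbb Q/\mathbb Z$), so no contradiction can come from this information.

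Two ingredients are missing:
(a) split the precover against a flat cover, $D_E(I)\cong G\oplus H$ with $D_E(i)=(g,0)$, so any redundancy is a flat cotorsion summand $H$ killed by $D_E(i)$;
(b) if $H\neq0$ is flat cotorsion, then $T_{\mathfrak p}(H)\neq0$ for some $\mathfrak p$.

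For (b) the paper argues as follows. It picks $\mathfrak p$ with $H(\mathfrak p)\neq0$ in the Kanda--Nakamura decomposition $H\cong\prod_{\mathfrak q}H(\mathfrak q)$. It then identifies $T_{\mathfrak p}(H)\cong\overline A_{\mathfrak p}\otimes_{A_{\mathfrak p}}H(\mathfrak p)$, using three facts: $\Lambda^{\mathfrak p}\Hom_R(R_{\mathfrak p},H)\cong H(\mathfrak p)$, $\mathfrak p$-adic completion does not change $C/\mathfrak pC$, and $\mathfrak pA_{\mathfrak p}\subseteq J_{\mathfrak p}$. This is nonzero because $H(\mathfrak p)\to\overline A_{\mathfrak p}\otimes_{A_{\mathfrak p}}H(\mathfrak p)$ is a flat cover with nonzero source. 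So $T_{\mathfrak p}(H)$ is a nonzero summand of $T_{\mathfrak p}(D_EI)$ lying in $\Ker T_{\mathfrak p}(D_Ei)$, which contradicts injectivity.

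Only this direction is available. Your ``if and only if'' is unsupported, and so is the ``equivalently'' between $T_{\mathfrak p}(\Ker\phi\hookrightarrow F)=0$ and injectivity of $T_{\mathfrak p}(\phi)$, since $T_{\mathfrak p}$ is neither left nor right exact in general. You never actually use either claim.

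Two further inputs are asserted rather than proved.

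First, ``envelope $\Rightarrow$ $S_{\mathfrak p}(i)$ is an isomorphism'' is not a formal dual of anything. It needs that central localization preserves essential extensions of arbitrary, non-finitely generated modules. The paper proves this from noetherianity: for cyclic $U=xA$ and central $t$ annihilating the essential submodule $N\cap U$, stabilization of $\Ker\mu_{t^n}$ forces $Ut^n=0$. Semisimplicity of $\overline A_{\mathfrak p}$ then forces every map $\overline A_{\mathfrak p}\to I_{\mathfrak p}$ to land in $M_{\mathfrak p}$.

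Second, in $(2)\Rightarrow(1)$ you split $I=I'\oplus I''$ with $I'\cong E_A(M)$ before knowing that $I$ is injective. You first need the converse of your flatness argument. If $E$ is a cogenerator and $D_E(I)$ is flat, then $L\otimes_AD_E(I)\cong D_E(\Hom_A(L,I))$ for right ideals $L$, together with faithfulness of $D_E$, makes $\Hom_A(A,I)\to\Hom_A(L,I)$ surjective. Baer's criterion then applies.

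Once that is inserted, your $(2)\Rightarrow(1)$ is correct and slightly more direct than the paper's. The paper instead reflects ``$D_E(a)$ is an automorphism'' to ``$a$ is an automorphism'' and then shows that a left-minimal monomorphism into an injective is an envelope. Your precover argument, via $\Hom_{A^{\op}}(F,D_EM)\cong\Hom_A(M,D_EF)$ with $D_EF$ injective, is also a valid substitute for the paper's use of $\Ext^1_{A^{\op}}(L,D_E(I/M))=0$.
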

Standard background on covers, cotorsion pairs, and relative homological algebra may be found in \cite{EnochsJenda2000,Xu1996}.

Throughout the paper all rings are associative with identity and all modules are unital. The symbols used throughout are collected here.

\begin{center}
	\small
	\begin{tabularx}{\textwidth}{@{}>{\raggedright\arraybackslash}p{0.24\textwidth}X@{}}
		\toprule
		\textbf{Symbol} & \textbf{Meaning} \\
		\midrule
		$R,A,Z(A)$ & the commutative noetherian base ring, a Noether $R$-algebra, and the center of $A$ \\
		$\Mod A,\Mod A^{\op}$ & right $A$-modules and, respectively, left $A$-modules \\
		$\Spec R,\Spec A,\Max A$ & prime ideals of $R$, two-sided prime ideals of $A$, and maximal two-sided ideals of $A$ \\
		$\mathfrak p,P$ & a prime of $R$ and a prime of $A$; when both occur, $\mathfrak p=P\cap R$ \\
		$R_{\mathfrak p},A_{\mathfrak p},X_{\mathfrak p}$ & central localizations of $R$, $A$, and an $R$-module or $A$-module $X$ at $R\setminus\mathfrak p$ \\
		$\kappa(\mathfrak p)$ & the residue field $R_{\mathfrak p}/\mathfrak pR_{\mathfrak p}$ \\
		$J_{\mathfrak p},\overline A_{\mathfrak p}$ & $\rad A_{\mathfrak p}$ and the semisimple quotient $A_{\mathfrak p}/J_{\mathfrak p}$ \\
		$E_A(X)$ & the injective envelope of the right $A$-module $X$ \\
		$S_A(P),I_A(P)$ & the simple module attached to $P$ and its injective envelope $E_A(S_A(P))$ \\
		$D_E(-),D(-)$ & $\Hom_R(-,E)$ and, in Appendix~A, the standard Artin dual $\Hom_R(-,E_R(R/J(R)))$ \\
		$S_{\mathfrak p},T_{\mathfrak p}$ & the local socle and local top functors defined in \eqref{eq:Sp} and \eqref{eq:Tp} \\
		$\Lambda^{\mathfrak p}X$ & the $\mathfrak p$-adic completion $\varprojlim_{n\ge1}X/\mathfrak p^nX$ \\
		$\Soc,\Top,\rad$ & socle, top, and Jacobson radical \\
		$\Hom,\Ext,\End$ & homomorphism, extension, and endomorphism groups over the displayed subscript ring \\
		$\Ker,\Coker,\im$ & kernel, cokernel, and image \\
		$K\smallin P$ & $K$ is superfluous (small) in $P$: $K+L=P$ implies $L=P$ \\
		\bottomrule
	\end{tabularx}
\end{center}

\section{Preliminaries on Noether algebras and central duality}
\begin{definition}
	Let $R$ be a commutative noetherian ring. A \emph{Noether $R$-algebra} is a ring $A$ equipped with a ring homomorphism $R\to A$ whose image is contained in $Z(A)$ and such that $A$ is finitely generated as an $R$-module.
\end{definition}

A Noether $R$-algebra is left and right noetherian. We write $\Mod A$ for the category of right $A$-modules and identify $\Mod A^{\op}$ with the category of left $A$-modules. For $\mathfrak p\in\Spec R$ set
\[
A_{\mathfrak p}=A\otimes_R R_{\mathfrak p},\qquad
J_{\mathfrak p}=\rad A_{\mathfrak p},\qquad
\overline A_{\mathfrak p}=A_{\mathfrak p}/J_{\mathfrak p}.
\]
By \cite[Proposition~2.19 and Remark~2.20]{KandaNakamura2022}, $\overline A_{\mathfrak p}$ is a finite-dimensional semisimple $\kappa(\mathfrak p)$-algebra and
\begin{equation}\label{eq:pJ}
	\mathfrak p A_{\mathfrak p}\subseteq J_{\mathfrak p}.
\end{equation}
The $\mathfrak p$-adic completion notation is always taken with respect to the central ideal $\mathfrak pA$, and $\Hom_R(R_{\mathfrak p},-)$ denotes the corresponding colocalization functor.

We recall the approximation terminology, following \cite{Enochs1984,Xu1996}. A morphism $f:F\to M$ with $F$ flat is a flat precover if every morphism from a flat module to $M$ factors through $f$. A flat precover is a flat cover if it is right minimal, that is, if every $u\in\End_A(F)$ satisfying $fu=f$ is an automorphism. Flat covers exist over arbitrary rings \cite{BicanElBashirEnochs2001}; moreover a flat precover is necessarily an epimorphism, since every morphism $A\to M$ factors through it. A right $A$-module $C$ is cotorsion if
\[
\Ext_A^1(F,C)=0
\]
for every flat right $A$-module $F$.

Fix an injective $R$-module $E$. For a right $A$-module $X$ define
\[
D_E(X)=\Hom_R(X,E)
\]
with left $A$-action
\[
(a\varphi)(x)=\varphi(xa).
\]
Thus $D_E:\Mod A\to\Mod A^{\op}$ is exact and contravariant. We shall use repeatedly the following consequences of \cite[Proposition~2.2]{KandaNakamura2022}:
\begin{enumerate}[label=(D\arabic*)]
	\item $D_E(X)$ is pure-injective, hence cotorsion, for every $X$;
	\item if $X$ is injective, then $D_E(X)$ is flat and cotorsion;
	\item if $E$ is an injective cogenerator and $D_E(X)$ is flat, then $X$ is injective.
\end{enumerate}
The same assertions hold after replacing $A$ by $A^{\op}$.

We also recall three structural facts from Kanda--Nakamura \cite{KandaNakamura2022}. First, every flat cotorsion right $A$-module $H$ admits a product decomposition
\begin{equation}\label{eq:prod}
	H\cong\prod_{\mathfrak q\in\Spec R}H(\mathfrak q),
\end{equation}
where each $H(\mathfrak q)$ is $\mathfrak q$-local, $\mathfrak q$-adically complete, flat, and cotorsion; the isomorphism class of $H(\mathfrak q)$ is uniquely determined by $H$ \cite[Proposition~3.7]{KandaNakamura2022}. Moreover,
\begin{equation}\label{eq:completionpiece}
	\Lambda^{\mathfrak p}\Hom_R(R_{\mathfrak p},H)\cong H(\mathfrak p)
\end{equation}
for every $\mathfrak p\in\Spec R$ \cite[Lemma~3.3(3)]{KandaNakamura2022}. Second, if $F$ is a $\mathfrak p$-local, $\mathfrak p$-complete flat right $A$-module, then the canonical map
\begin{equation}\label{eq:localcover}
	F\longrightarrow F\otimes_{A_{\mathfrak p}}\overline A_{\mathfrak p}
\end{equation}
is a flat cover \cite[Proposition~4.5(1)]{KandaNakamura2022}. Third, the indecomposable injective right $A$-modules are indexed by $\Spec A$: for $P\in\Spec A$, with $\mathfrak p=P\cap R$, the module
\[
I_A(P)=E_A(S_A(P))
\]
is $\mathfrak p$-local and every indecomposable injective right $A$-module is of this form; see \cite[(2.5)--(2.6) and Theorem~2.22]{KandaNakamura2022}.

\section{Local socles and injective envelopes}
For $X\in\Mod A$ and $\mathfrak p\in\Spec R$ define
\begin{equation}\label{eq:Sp}
	S_{\mathfrak p}(X)=\Hom_{A_{\mathfrak p}}(\overline A_{\mathfrak p},X_{\mathfrak p}).
\end{equation}
Since $\overline A_{\mathfrak p}$ is semisimple, this functor measures the semisimple $\overline A_{\mathfrak p}$-socle of $X_{\mathfrak p}$ relevant to the prime $\mathfrak p$.

We first prove that central localization preserves essentiality. The noetherian hypothesis is needed because the ambient injective module need not be finitely generated.

\begin{lemma}\label{lem:local-essential}
	Let $A$ be right noetherian, let $S\subseteq Z(A)$ be a multiplicatively closed subset, and let $N\subseteq M$ be an essential right $A$-submodule. Then $S^{-1}N\subseteq S^{-1}M$ is essential as an $S^{-1}A$-submodule.
\end{lemma}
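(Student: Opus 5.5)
The plan is to check essentiality elementwise. Take a nonzero element of $S^{-1}M$ and show that some $S^{-1}A$-multiple of it is a nonzero element of $S^{-1}N$. Write the element as $m/s$ with $m\in M$ and $s\in S$. Since $s$ is central and becomes a unit in $S^{-1}A$, multiplying by $s/1$ lets us assume the element is $m/1\neq 0$. The condition $m/1\neq0$ means $mt\neq0$ for every $t\in S$.

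The key point is to replace $m$ by a multiple $mt$ whose image in $S^{-1}M$ "sees" all of $mtA$. Consider the right ideals
\[
K_t=\{a\in A : mta\text{ is killed by some } u\in S\},\qquad t\in S.
\]
This is well defined: if $mta\,u=0$ then, since $u$ is central, $mt\,au=0$. Each $K_t$ is a right ideal, and $t\mid t'$ (in the sense $t'=tt''$) gives $K_t\subseteq K_{t'}$.

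Since $A$ is right noetherian, the family $\{K_t\}$ has a maximal element, say $K_{t_0}$. As $S$ is multiplicatively closed, for any $t$ we have $K_{t_0}\subseteq K_{t_0t}$, so maximality forces $K_{t_0t}=K_{t_0}$. Put $m'=mt_0$. Then $m'\neq0$, and the kernel of $A\to S^{-1}M$, $a\mapsto m'a/1$, is exactly $K_{t_0}$. This $K_{t_0}$ is also the set of $a$ with $m'a\,t$ torsion, for any $t$. I expect this stabilization to be the main obstacle. It is the one place the noetherian hypothesis is used, and one must take care that the chain is indexed by a directed set.

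Now apply essentiality of $N$ in $M$ to the submodule $m'A$. I want a nonzero element of $N$ that stays nonzero after localization, so apply essentiality to a well-chosen submodule. Choose $a_0\notin K_{t_0}$; this is possible because $m'/1\neq0$ means $1\notin K_{t_0}$. Then $m'a_0A\cap N\ne0$, but the intersection element could be torsion.

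To avoid torsion, use the torsion submodule $\tau=\{x\in M: xu=0\text{ for some }u\in S\}$. By the choice of $t_0$, the cyclic module $m'A$ satisfies $\tau\cap m'A=m'K_{t_0}$. I then pass to a torsion-free piece of it. Because $A$ is noetherian, $m'K_{t_0}$ is finitely generated, so a single $u\in S$ kills it. Hence $m'u\,A\cap\tau=0$: indeed $m'ua\in\tau$ gives $ua\in K_{t_0}$, so $m'ua=0$.

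So $L=m'uA$ is nonzero and torsion-free. It is nonzero because $m'u\notin\tau$, since otherwise $1\in K_{t_0}$. Essentiality gives $0\neq x=m'ub\in N\cap L$. Since $L\cap\tau=0$, we get $x/1\neq0$ in $S^{-1}N$. Moreover
\[
x/1=(m/1)\cdot(t_0ub/1)\in (m/1)S^{-1}A,
\]
which proves the claim.
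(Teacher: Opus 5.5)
Your argument is correct and takes a different route from the paper's. The paper sets $U=xA$ and $V=N\cap U$, and argues by contradiction. If $S^{-1}V=0$, finite generation gives a single $t\in S$ with $Vt=0$, so $\Ker\mu_t$ is essential in $U$. A Fitting-type argument with the chain $\Ker\mu_{t^n}$ (ACC on $U$) then forces $Ut^n=0$, contradicting $x/1\neq0$.

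You instead construct the good element directly:
\begin{itemize}
\item the $S$-torsion part $mK$ of $mA$ is finitely generated, so a single $u\in S$ kills it;
\item then $muA$ is a nonzero $S$-torsion-free submodule;
\item so any nonzero element of $N\cap muA$ survives localization.
\end{itemize}
After the cleanup below, your route uses noetherianity only through finite generation of the right ideal $K$. It also gives a bit more: $(m/1)S^{-1}A=S^{-1}(muA)$ with $muA$ torsion-free. The paper's route avoids introducing the torsion submodule, but uses both finite generation of $V$ and ACC on $U$.

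Two remarks on the write-up.

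First, the stabilization you flag as the main obstacle is vacuous. The torsion submodule $\tau$ is $S$-saturated: if $xt\in\tau$ with $t\in S$, then $xtv=0$ for some $v\in S$, so $x(tv)=0$ and $x\in\tau$. Since $t$ is central, this gives $K_t=\{a\in A: ma\in\tau\}=K_1$ for every $t$. So you can take $t_0=1$ and drop the maximal-element step.

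Second, the inference ``$m'ua\in\tau$ gives $ua\in K_{t_0}$, so $m'ua=0$'' does not follow as written. From $ua\in K_{t_0}$ and $m'K_{t_0}u=0$ you only get $m'uau=0$. What you need is $a\in K_{t_0}$. This follows from the same saturation: $m'au\in\tau$ implies $m'a\in\tau$, which is also your own observation $K_{t_0u}=K_{t_0}$. Then $m'a\in m'K_{t_0}$, hence $m'au=m'ua=0$, and the rest of your argument goes through unchanged.
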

\begin{proof}
	Let $0\ne L\subseteq S^{-1}M$ be an $S^{-1}A$-submodule. Choose $0\ne x/s\in L$. Since $s/1$ is a unit of $S^{-1}A$,
	\[
	(x/s)(s/1)=x/1\ne0,
	\]
	so $x/1\in L$. Put $U=xA$. The cyclic module $U$ is noetherian because $A$ is right noetherian, and $S^{-1}U$ is a nonzero submodule of $L$. Let
	\[
	V=N\cap U.
 	\]
	Since $N$ is essential in $M$, the submodule $V$ is essential in $U$.
	
	We claim that $S^{-1}V\ne0$. Suppose to the contrary that $S^{-1}V=0$. Because $V$ is finitely generated, choose generators $v_1,\dots,v_r$. For each $i$ there exists $s_i\in S$ with $v_is_i=0$. Since $S\subseteq Z(A)$, the product
	\[
	t=s_1\cdots s_r\in S
	\]
	annihilates all of $V$. Let $\mu_t:U\to U$ denote right multiplication by $t$. Then $V\subseteq\Ker\mu_t$. Since $V$ is essential in $U$, the submodule $\Ker\mu_t$ is essential in $U$.
	
	Because $U$ is noetherian, the ascending chain
	\[
	\Ker\mu_t\subseteq\Ker\mu_{t^2}\subseteq\Ker\mu_{t^3}\subseteq\cdots
	\]
	stabilizes, say $\Ker\mu_{t^n}=\Ker\mu_{t^{n+1}}$ for some $n\ge1$. If $Ut^n\ne0$, then the nonzero submodule $Ut^n$ meets the essential submodule $\Ker\mu_t$ nontrivially. Hence there is $u\in U$ such that
	\[
	0\ne ut^n\in\Ker\mu_t.
	\]
	Thus $ut^{n+1}=0$, so $u\in\Ker\mu_{t^{n+1}}=\Ker\mu_{t^n}$, contradicting $ut^n\ne0$. Therefore $Ut^n=0$. But $t$ is invertible after localizing at $S$, so $S^{-1}U=0$, contradicting the construction of $U$. Hence $S^{-1}V\ne0$.
	
	Finally,
	\[
	0\ne S^{-1}V\subseteq (S^{-1}N)\cap L.
	\]
	Thus every nonzero submodule of $S^{-1}M$ meets $S^{-1}N$ nontrivially, proving essentiality.
\end{proof}

\begin{proposition}[Local criterion for injective envelopes]\label{prop:local-inj}
	Let $i:M\to I$ be a morphism in $\Mod A$. The following are equivalent:
	\begin{enumerate}[label=(\roman*)]
		\item $i$ is an injective envelope;
		\item $I$ is injective, $i$ is a monomorphism, and
		\[
		S_{\mathfrak p}(i):\Hom_{A_{\mathfrak p}}(\overline A_{\mathfrak p},M_{\mathfrak p})
		\longrightarrow
		\Hom_{A_{\mathfrak p}}(\overline A_{\mathfrak p},I_{\mathfrak p})
		\]
		is an isomorphism for every $\mathfrak p\in\Spec R$.
	\end{enumerate}
\end{proposition}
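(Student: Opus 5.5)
We prove both implications, using Lemma~\ref{lem:local-essential} for (i)$\Rightarrow$(ii) and the Kanda--Nakamura classification of indecomposable injectives for the converse. Throughout, $S_{\mathfrak p}(X)$ is identified with the annihilator of $J_{\mathfrak p}$ in $X_{\mathfrak p}$, i.e.\ $\{x\in X_{\mathfrak p}: xJ_{\mathfrak p}=0\}$, viewed as a right $\overline A_{\mathfrak p}$-module. Since $i_{\mathfrak p}$ is injective whenever $i$ is, $S_{\mathfrak p}(i)$ is always injective. So (ii) amounts to surjectivity: every $y\in I_{\mathfrak p}$ with $yJ_{\mathfrak p}=0$ must lie in $M_{\mathfrak p}$.

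\textbf{(i)$\Rightarrow$(ii).} Fix $\mathfrak p$ and apply Lemma~\ref{lem:local-essential} with $S=R\setminus\mathfrak p$. It shows that $M_{\mathfrak p}\subseteq I_{\mathfrak p}$ is essential as $A_{\mathfrak p}$-modules. Now $\Soc_{\mathfrak p}:=\{y\in I_{\mathfrak p}: yJ_{\mathfrak p}=0\}$ is an $\overline A_{\mathfrak p}$-module, hence semisimple as an $A_{\mathfrak p}$-module, since $\overline A_{\mathfrak p}$ is semisimple. Essentiality gives that $M_{\mathfrak p}\cap \Soc_{\mathfrak p}$ is essential in the semisimple module $\Soc_{\mathfrak p}$. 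A semisimple module has no proper essential submodules, so $M_{\mathfrak p}\cap\Soc_{\mathfrak p}=\Soc_{\mathfrak p}$, which is the required surjectivity.

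\textbf{(ii)$\Rightarrow$(i).} We must show that $i(M)$ is essential in $I$. Suppose it is not. Then $i(M)$ has a nonzero complement $N$ with $N\cap i(M)=0$. Choose a nonzero finitely generated $N'\subseteq N$. Because $A$ is right noetherian, $E_A(N')$ is a direct summand of $I$ and decomposes into indecomposable injectives; every nonzero submodule of $N'$ also meets one of these. The cleanest route: take $0\ne x\in N$; then $xA$ is noetherian, so it contains a uniform submodule $U$, whose envelope is indecomposable, hence equal to $I_A(P)$ for some $P\in\Spec A$ with $\mathfrak p=P\cap R$ \cite[Theorem~2.22]{KandaNakamura2022}. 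Thus $U\subseteq I$ is a nonzero submodule meeting $i(M)$ trivially, and $E_A(U)\cong I_A(P)$ is $\mathfrak p$-local and contains $S_A(P)$ essentially.

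The key step, which I expect to be the main obstacle, is producing a nonzero element of $U_{\mathfrak p}$ (or of $E(U)_{\mathfrak p}$) killed by $J_{\mathfrak p}$ but not lying in $M_{\mathfrak p}$. I would argue as follows. Localization is exact, so $U_{\mathfrak p}\cap M_{\mathfrak p}=(U\cap i(M))_{\mathfrak p}=0$. Moreover $U$ is essential in the $\mathfrak p$-local module $I_A(P)$, so $U_{\mathfrak p}\neq0$ by Lemma~\ref{lem:local-essential}. Next use the structure of $I_A(P)$ from \cite[(2.5)--(2.6)]{KandaNakamura2022}: $I_A(P)$ equals $I_A(P)_{\mathfrak p}$, every element is annihilated by a power of $\mathfrak p$, and $\Hom_{A_{\mathfrak p}}(\overline A_{\mathfrak p},I_A(P))\ne0$, being essential, namely the socle $S_A(P)$. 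Take $0\ne u\in U_{\mathfrak p}$. The submodule $uA_{\mathfrak p}$ is finitely generated and $\mathfrak p$-torsion, hence of finite length over $A_{\mathfrak p}$, because $A_{\mathfrak p}/\mathfrak p^nA_{\mathfrak p}$ is artinian. Therefore it has a nonzero simple submodule. Simple $A_{\mathfrak p}$-modules are annihilated by $J_{\mathfrak p}$, so this gives $0\neq y\in U_{\mathfrak p}$ with $yJ_{\mathfrak p}=0$. Such a $y$ lies in $S_{\mathfrak p}(I)$. By surjectivity of $S_{\mathfrak p}(i)$, $y\in M_{\mathfrak p}\cap U_{\mathfrak p}=0$, a contradiction. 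Hence $i$ is essential, and therefore an injective envelope.

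The $\mathfrak p$-torsion claim should be double-checked. If needed, replace $U$ by $U\cap S_A(P)$, which is nonzero because $S_A(P)$ is essential in $E(U)$. Then $U$ is simple, so $U\mathfrak p=0$ (the prime $\mathfrak p$ annihilates $S_A(P)$) and $U_{\mathfrak p}\ne0$. The finite-length step then becomes trivial.
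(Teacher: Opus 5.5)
Your proof is correct. Direction (i)$\Rightarrow$(ii) is the paper's argument: Lemma~\ref{lem:local-essential}, plus the fact that a semisimple module has no proper essential submodule.

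For (ii)$\Rightarrow$(i) you take a genuinely different route. The paper never proves essentiality directly. It extends $i$ to $u\colon E_A(M)\to I$ and splits $I=u(E_A(M))\oplus I'$. It then uses Matlis' decomposition to find a summand $I_A(P)$ of $I'$, and Kanda--Nakamura's Proposition~2.19 ($S_A(P)$ is a summand of $\overline A_{\mathfrak p}$) to get $\Hom_{A_{\mathfrak p}}(\overline A_{\mathfrak p},I'_{\mathfrak p})\neq0$. The merit of that route is that it exhibits the defect as a redundant injective summand. This mirrors the redundant flat-cotorsion summand $H$ in Proposition~\ref{prop:local-min}.

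You instead argue directly. A nonzero submodule disjoint from $i(M)$ stays disjoint after localization, and its localization contains a simple $A_{\mathfrak p}$-submodule. That submodule is killed by $J_{\mathfrak p}$ simply because the Jacobson radical annihilates simples. This needs neither Matlis' decomposition nor Proposition~2.19.

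Your route can in fact drop the Kanda--Nakamura input entirely, including the $\mathfrak p$-torsion property of $I_A(P)$ you were unsure about. For $0\ne x\in N$, choose $u\in xA$ whose annihilator in $R$ is an associated prime $\mathfrak p$ of the finitely generated $R$-module $xA$. Then $uA\mathfrak p=u\mathfrak pA=0$ by centrality, and $u/1\ne0$. So $(uA)_{\mathfrak p}$ is a nonzero module over the finite-dimensional $\kappa(\mathfrak p)$-algebra $A_{\mathfrak p}/\mathfrak pA_{\mathfrak p}$, and it has a simple submodule. This makes the proposition elementary.

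There is one small slip in your fallback: $U\cap S_A(P)$ need not be a simple $A$-module. With $A=R=\mathbb Z$, $P=0$ and $U=\mathbb Z\subseteq\mathbb Q=S_A(P)$, the intersection is $\mathbb Z$. What does hold, and is all you need, is that it is killed by $\mathfrak p$ and that its localization is the simple $A_{\mathfrak p}$-module $S_A(P)$.
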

\begin{proof}
	Assume first that $i$ is an injective envelope. Then $I$ is injective and $M\subseteq I$ is essential. By Lemma~\ref{lem:local-essential}, $M_{\mathfrak p}\subseteq I_{\mathfrak p}$ is essential for every $\mathfrak p\in\Spec R$. Since $i_{\mathfrak p}$ is a monomorphism and $\Hom_{A_{\mathfrak p}}(\overline A_{\mathfrak p},-)$ is left exact, $S_{\mathfrak p}(i)$ is injective.
	
	To prove surjectivity, let
	\[
	\alpha:\overline A_{\mathfrak p}\longrightarrow I_{\mathfrak p}
	\]
	be an $A_{\mathfrak p}$-homomorphism and put $L=\im\alpha$. The module $L$ is semisimple because it is a quotient of the semisimple right $A_{\mathfrak p}$-module $\overline A_{\mathfrak p}$. Essentiality of $M_{\mathfrak p}$ in $I_{\mathfrak p}$ implies that $L\cap M_{\mathfrak p}$ is essential in $L$. A semisimple module has no proper essential submodule: if $U\subsetneq L$ is a submodule, semisimplicity gives $L=U\oplus U'$ with $U'\ne0$, and then $U\cap U'=0$. Therefore $L\cap M_{\mathfrak p}=L$, so $L\subseteq M_{\mathfrak p}$. Hence $\alpha$ factors through $M_{\mathfrak p}$, proving that $S_{\mathfrak p}(i)$ is surjective.
	
	Conversely, assume (ii). Let
	\[
	e:M\longrightarrow E_A(M)
	\]
	be the injective envelope of $M$. Since $I$ is injective, $i$ extends across $e$: there exists an $A$-homomorphism
	\[
	u:E_A(M)\longrightarrow I
	\]
	with $ue=i$. We first show that $u$ is a monomorphism. If $\Ker u\ne0$, essentiality of $e(M)$ in $E_A(M)$ gives
	\[
	0\ne\Ker u\cap e(M).
	\]
	But $u$ is injective on $e(M)$ because $ue=i$ and $i$ is monic, a contradiction. Thus $u$ is monic. Since $E_A(M)$ is injective, $u$ splits, so
	\begin{equation}\label{eq:Isplit}
		I=u(E_A(M))\oplus I'
	\end{equation}
	for some injective right $A$-module $I'$.
	
	Suppose $I'\ne0$. Since $A$ is right noetherian, every injective right $A$-module decomposes as a direct sum of indecomposable injectives (see \cite[Theorem~2.5]{Matlis1958}). Hence $I'$ has a direct summand isomorphic to $I_A(P)$ for some $P\in\Spec A$. Put $\mathfrak p=P\cap R$. The module $I_A(P)$ is $\mathfrak p$-local by \cite[(2.6)]{KandaNakamura2022}. Moreover, by \cite[Proposition~2.19]{KandaNakamura2022}, the semisimple right $A_{\mathfrak p}$-module $\overline A_{\mathfrak p}$ contains a nonzero direct summand isomorphic to a finite direct sum of copies of $S_A(P)$. Therefore there is a nonzero composite
	\[
	\overline A_{\mathfrak p}\longrightarrow S_A(P)\longrightarrow I_A(P),
	\]
	and consequently
	\[
	\Hom_{A_{\mathfrak p}}(\overline A_{\mathfrak p},I'_{\mathfrak p})\ne0.
	\]
	After localizing \eqref{eq:Isplit}, we have
	\[
	I_{\mathfrak p}=u(E_A(M))_{\mathfrak p}\oplus I'_{\mathfrak p}.
	\]
	Since $i_{\mathfrak p}(M_{\mathfrak p})\subseteq u(E_A(M))_{\mathfrak p}$, the image of $S_{\mathfrak p}(i)$ is contained in the first summand of
	\[
	\Hom_{A_{\mathfrak p}}(\overline A_{\mathfrak p},I_{\mathfrak p})
	\cong
	\Hom_{A_{\mathfrak p}}(\overline A_{\mathfrak p},u(E_A(M))_{\mathfrak p})
	\oplus
	\Hom_{A_{\mathfrak p}}(\overline A_{\mathfrak p},I'_{\mathfrak p}).
	\]
	The second summand is nonzero, contradicting the assumed surjectivity of $S_{\mathfrak p}(i)$. Thus $I'=0$, so $u$ is an isomorphism and $i$ is an injective envelope.
\end{proof}

\section{The local socle--top duality}
For a left $A$-module $Y$ and $\mathfrak p\in\Spec R$, define
\begin{equation}\label{eq:Tp}
	T_{\mathfrak p}(Y)=\overline A_{\mathfrak p}\otimes_{A_{\mathfrak p}}\Hom_R(R_{\mathfrak p},Y),
\end{equation}
where $\Hom_R(R_{\mathfrak p},Y)$ carries its canonical left $A_{\mathfrak p}$-module structure. The next lemma is the exact noncommutative analogue of the tensor--Hom identification used by Puuska.

\begin{lemma}\label{lem:socle-top}
	Let $E$ be an injective $R$-module, let $X\in\Mod A$, and let $\mathfrak p\in\Spec R$. There is a natural isomorphism of $R$-modules
	\begin{equation}\label{eq:local-dual}
		T_{\mathfrak p}(D_E X)\xrightarrow{\ \cong\ }D_E(S_{\mathfrak p}(X)).
	\end{equation}
	It is natural in $X$ and in $E$.
\end{lemma}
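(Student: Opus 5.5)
We build the isomorphism as a chain of standard adjunction and evaluation isomorphisms, each natural in $X$ and $E$.

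\emph{Step 1: colocalization of the dual.} By Hom--tensor adjunction over $R$,
\[
\Hom_R(R_{\mathfrak p},D_EX)=\Hom_R(R_{\mathfrak p},\Hom_R(X,E))\cong\Hom_R(X\otimes_RR_{\mathfrak p},E)=D_E(X_{\mathfrak p}).
\]
This is an isomorphism of left $A_{\mathfrak p}$-modules, where the $A$-action on $X_{\mathfrak p}=X\otimes_AA_{\mathfrak p}$ comes from the right and $R_{\mathfrak p}$ acts centrally on both sides. Hence
\[
T_{\mathfrak p}(D_EX)\cong\overline A_{\mathfrak p}\otimes_{A_{\mathfrak p}}\Hom_R(X_{\mathfrak p},E).
\]
Since $X_{\mathfrak p}$ is an $R_{\mathfrak p}$-module, we may also write $\Hom_R(X_{\mathfrak p},E)=\Hom_{R_{\mathfrak p}}(X_{\mathfrak p},\Hom_R(R_{\mathfrak p},E))$. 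We will not need this form.

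\emph{Step 2: tensor evaluation.} Use the canonical map
\[
\theta:\overline A_{\mathfrak p}\otimes_{A_{\mathfrak p}}\Hom_R(X_{\mathfrak p},E)\longrightarrow\Hom_R(\Hom_{A_{\mathfrak p}}(\overline A_{\mathfrak p},X_{\mathfrak p}),E),\qquad \theta(b\otimes\varphi)(f)=\varphi(f(b)).
\]
It is well defined: for $a\in A_{\mathfrak p}$ we have $\varphi(f(ba))=\varphi(f(b)a)=(a\varphi)(f(b))$. It is also visibly natural in $X$ and in $E$. The target is exactly $D_E(S_{\mathfrak p}(X))$.

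To see that $\theta$ is bijective, note that $\overline A_{\mathfrak p}$ is a finitely presented right $A_{\mathfrak p}$-module, because $A_{\mathfrak p}$ is noetherian. Since $E$ is injective over $R$, the functor $\Hom_R(-,E)$ is exact. The standard tensor-evaluation argument then applies:
\begin{enumerate}
\item For $\overline A_{\mathfrak p}$ replaced by $A_{\mathfrak p}^n$, the map $\theta$ is an isomorphism, since both sides become $D_E(X_{\mathfrak p})^n$.
\item Choose a presentation $A_{\mathfrak p}^m\to A_{\mathfrak p}^n\to\overline A_{\mathfrak p}\to0$.
\item The left-hand side is right exact in the first variable.
\item The right-hand side is the composite of the left exact contravariant functor $\Hom_{A_{\mathfrak p}}(-,X_{\mathfrak p})$ with the exact contravariant functor $D_E$, so it is also right exact in the first variable.
\item The five lemma then gives that $\theta$ is an isomorphism.
\end{enumerate}

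The main point to check carefully is Step 2. Two things need attention. First, the bimodule structures must match, so that $\theta$ really is $A_{\mathfrak p}$-balanced and compatible with Step 1. Second, exactness of $D_E$ is what makes the five-lemma argument work. The injectivity of $E$ is therefore used essentially here, together with finite presentation of $\overline A_{\mathfrak p}$; this is available because $A$ is a Noether algebra. Everything else is formal, and naturality in $X$ and $E$ follows because each map in the chain is defined by universal formulas.
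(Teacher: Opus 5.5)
Your proposal is correct and follows the paper's proof essentially step for step. Both use the colocalization adjunction $\Hom_R(R_{\mathfrak p},\Hom_R(X,E))\cong\Hom_R(X_{\mathfrak p},E)$, then the evaluation map $b\otimes\varphi\mapsto(f\mapsto\varphi(f(b)))$, checked on finite free modules and extended to the finitely presented module $\overline A_{\mathfrak p}$ through a presentation and the exactness of $D_E$. The only cosmetic difference is that you phrase the last step as a comparison of two right exact functors of $B$, while the paper directly matches the maps in the two presented sequences and identifies their cokernels.
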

\begin{proof}
	Tensor--Hom adjunction over the commutative ring $R$ gives a natural isomorphism
	\begin{equation}\label{eq:coloc-dual}
		\Hom_R(R_{\mathfrak p},\Hom_R(X,E))\xrightarrow{\ \cong\ }\Hom_R(X\otimes_RR_{\mathfrak p},E)=\Hom_R(X_{\mathfrak p},E).
	\end{equation}
	Explicitly, if $\eta:R_{\mathfrak p}\to\Hom_R(X,E)$, then the corresponding map $X_{\mathfrak p}=X\otimes_RR_{\mathfrak p}\to E$ is
	\[
	x\otimes q\longmapsto\eta(q)(x).
	\]
	With the canonical left $A_{\mathfrak p}$-module structures, \eqref{eq:coloc-dual} is $A_{\mathfrak p}$-linear. Thus it remains to prove
	\begin{equation}\label{eq:eval-general}
		\overline A_{\mathfrak p}\otimes_{A_{\mathfrak p}}\Hom_R(X_{\mathfrak p},E)
		\xrightarrow{\ \cong\ }
		\Hom_R\!\left(\Hom_{A_{\mathfrak p}}(\overline A_{\mathfrak p},X_{\mathfrak p}),E\right).
	\end{equation}
	
	We prove a slightly more general statement. Let $B$ be a finitely presented right $A_{\mathfrak p}$-module. Define
	\[
	\Theta_{B,X}:B\otimes_{A_{\mathfrak p}}\Hom_R(X_{\mathfrak p},E)
	\longrightarrow
	\Hom_R\!\left(\Hom_{A_{\mathfrak p}}(B,X_{\mathfrak p}),E\right)
	\]
	by
	\begin{equation}\label{eq:theta}
		\Theta_{B,X}(b\otimes\lambda)(g)=\lambda(g(b)).
	\end{equation}
	We verify first that this is well defined. If $a\in A_{\mathfrak p}$, then
	\[
	\Theta_{B,X}(ba\otimes\lambda)(g)=\lambda(g(ba))=\lambda(g(b)a)=(a\lambda)(g(b))
	=\Theta_{B,X}(b\otimes a\lambda)(g),
	\]
	where the left $A_{\mathfrak p}$-action on $\Hom_R(X_{\mathfrak p},E)$ is $(a\lambda)(x)=\lambda(xa)$. Hence \eqref{eq:theta} is $A_{\mathfrak p}$-balanced. Its naturality in $B,X$, and $E$ is immediate from the evaluation formula.
	
	If $B=A_{\mathfrak p}$, then $\Theta_{B,X}$ is the standard identification
	\[
	A_{\mathfrak p}\otimes_{A_{\mathfrak p}}\Hom_R(X_{\mathfrak p},E)
	\cong\Hom_R(X_{\mathfrak p},E)
	\cong\Hom_R(\Hom_{A_{\mathfrak p}}(A_{\mathfrak p},X_{\mathfrak p}),E).
	\]
	Therefore $\Theta_{B,X}$ is an isomorphism whenever $B$ is a finite free right $A_{\mathfrak p}$-module.
	
	Now choose a finite presentation
	\[
	A_{\mathfrak p}^{m}\xrightarrow{\delta}A_{\mathfrak p}^{n}\longrightarrow B\longrightarrow0.
	\]
	Applying $\Hom_{A_{\mathfrak p}}(-,X_{\mathfrak p})$ gives an exact sequence
	\[
	0\longrightarrow\Hom_{A_{\mathfrak p}}(B,X_{\mathfrak p})\longrightarrow X_{\mathfrak p}^{n}\longrightarrow X_{\mathfrak p}^{m}.
	\]
	Since $E$ is injective over $R$, the contravariant functor $\Hom_R(-,E)$ is exact, and hence we obtain a right-exact sequence
	\[
	\Hom_R(X_{\mathfrak p}^{m},E)\longrightarrow\Hom_R(X_{\mathfrak p}^{n},E)
	\longrightarrow\Hom_R(\Hom_{A_{\mathfrak p}}(B,X_{\mathfrak p}),E)\longrightarrow0.
	\]
	On the other hand, tensoring the presentation with the left $A_{\mathfrak p}$-module $\Hom_R(X_{\mathfrak p},E)$ yields
	\[
	\Hom_R(X_{\mathfrak p},E)^m\longrightarrow\Hom_R(X_{\mathfrak p},E)^n
	\longrightarrow B\otimes_{A_{\mathfrak p}}\Hom_R(X_{\mathfrak p},E)\longrightarrow0.
	\]
	Under the canonical identifications $\Hom_R(X_{\mathfrak p}^{r},E)\cong\Hom_R(X_{\mathfrak p},E)^r$, the first maps in the last two sequences coincide. This follows directly from the definition $(a\lambda)(x)=\lambda(xa)$ of the left $A_{\mathfrak p}$-action. Therefore the induced map on cokernels is precisely $\Theta_{B,X}$ and is an isomorphism.
	
	Finally, $A_{\mathfrak p}$ is right noetherian, so the cyclic right $A_{\mathfrak p}$-module $\overline A_{\mathfrak p}=A_{\mathfrak p}/J_{\mathfrak p}$ is finitely presented. Taking $B=\overline A_{\mathfrak p}$ gives \eqref{eq:eval-general}; combining this with \eqref{eq:coloc-dual} proves \eqref{eq:local-dual}.
\end{proof}

\begin{remark}
	If $A=R$ is commutative, then $A_{\mathfrak p}=R_{\mathfrak p}$, $J_{\mathfrak p}=\mathfrak pR_{\mathfrak p}$, and $\overline A_{\mathfrak p}=\kappa(\mathfrak p)$. Thus Lemma~\ref{lem:socle-top} becomes Puuska's natural isomorphism
	\[
	\kappa(\mathfrak p)\otimes_{R_{\mathfrak p}}\Hom_R(R_{\mathfrak p},\Hom_R(X,E))
	\cong
	\Hom_R\!\left(\Hom_{R_{\mathfrak p}}(\kappa(\mathfrak p),X_{\mathfrak p}),E\right).
	\]
\end{remark}

\section{A local minimality detector for flat precovers}
The next lemma is the point at which the structure theorem for flat cotorsion modules over Noether algebras replaces the commutative flat-cover criterion used by Puuska.

\begin{lemma}\label{lem:nonvanishing-top}
	Let $0\ne H$ be a flat cotorsion left $A$-module. Then $T_{\mathfrak p}(H)\ne0$ for some $\mathfrak p\in\Spec R$.
\end{lemma}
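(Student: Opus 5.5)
Our plan is to use the Kanda--Nakamura product decomposition \eqref{eq:prod} (applied to $A^{\op}$, which is again a Noether $R$-algebra) to pick a prime $\mathfrak p$ with $H(\mathfrak p)\ne0$. We then show that $T_{\mathfrak p}(H)$ surjects onto, or is isomorphic to, the top of $H(\mathfrak p)$, and that this top is nonzero by the flat cover \eqref{eq:localcover}.

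\textbf{Step 1: choose $\mathfrak p$.} Since $H\ne0$ and $H\cong\prod_{\mathfrak q}H(\mathfrak q)$, some $H(\mathfrak p)\ne0$. By \eqref{eq:completionpiece},
\[
H(\mathfrak p)\cong\Lambda^{\mathfrak p}\Hom_R(R_{\mathfrak p},H).
\]
Write $C=\Hom_R(R_{\mathfrak p},H)$, a left $A_{\mathfrak p}$-module.

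\textbf{Step 2: relate $T_{\mathfrak p}(H)$ to $H(\mathfrak p)$.} By \eqref{eq:pJ}, $\overline A_{\mathfrak p}$ is annihilated by $\mathfrak p$. Hence
\[
T_{\mathfrak p}(H)=\overline A_{\mathfrak p}\otimes_{A_{\mathfrak p}}C\cong\overline A_{\mathfrak p}\otimes_{A_{\mathfrak p}}(C/\mathfrak pC).
\]
Also $C/\mathfrak pC\cong\Lambda^{\mathfrak p}C/\mathfrak p\Lambda^{\mathfrak p}C$, since $\mathfrak pA$ is finitely generated. This standard fact about completions over noetherian $R$ needs a little care for non-finitely generated $C$. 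A safe route is to note only that there is a natural surjection $C\to C/\mathfrak pC$ and a natural map $C\to\Lambda^{\mathfrak p}C$ inducing $C/\mathfrak pC\to\Lambda^{\mathfrak p}C/\mathfrak p\Lambda^{\mathfrak p}C$. This induced map is surjective whenever $\Lambda^{\mathfrak p}C/\mathfrak p\Lambda^{\mathfrak p}C\cong C/\mathfrak pC$, which holds because $\mathfrak p$ is finitely generated (the standard fact that $\Lambda^{\mathfrak p}C/\mathfrak p^n\Lambda^{\mathfrak p}C\cong C/\mathfrak p^nC$). We obtain
\[
T_{\mathfrak p}(H)\cong\overline A_{\mathfrak p}\otimes_{A_{\mathfrak p}}H(\mathfrak p).
\]
Here $H(\mathfrak p)$ is $\mathfrak p$-local, so it is naturally an $A_{\mathfrak p}$-module.

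\textbf{Step 3: nonvanishing of the top.} $H(\mathfrak p)$ is a nonzero $\mathfrak p$-local, $\mathfrak p$-complete flat left $A$-module. By \eqref{eq:localcover} (for $A^{\op}$), the canonical map $H(\mathfrak p)\to\overline A_{\mathfrak p}\otimes_{A_{\mathfrak p}}H(\mathfrak p)$ is a flat cover. If the target were $0$, the zero map $0\to0$ would also be a flat cover of $0$. Uniqueness of flat covers up to isomorphism would then force $H(\mathfrak p)=0$, a contradiction. Alternatively, right minimality applied to $u=0$ gives $0$ as an automorphism of $H(\mathfrak p)$. Hence $T_{\mathfrak p}(H)\ne0$.

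The main obstacle is Step 2, namely the identification $C/\mathfrak pC\cong H(\mathfrak p)/\mathfrak pH(\mathfrak p)$ for a possibly non-finitely generated $C$. We would first try to cite the standard result for $\mathfrak p$-adic completion with respect to a finitely generated ideal (Simon/Yekutieli: $\Lambda^{\mathfrak p}C$ is complete and $\Lambda^{\mathfrak p}C/\mathfrak p\Lambda^{\mathfrak p}C\cong C/\mathfrak pC$). If that is unavailable, an alternative is to use flatness of $C$ over $A_{\mathfrak p}$ together with the Kanda--Nakamura description of $H(\mathfrak p)$.
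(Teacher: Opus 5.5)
Your proposal is correct and follows the paper's proof essentially step for step: you decompose $H$ via Kanda--Nakamura applied to $A^{\op}$, identify $T_{\mathfrak p}(H)\cong\overline A_{\mathfrak p}\otimes_{A_{\mathfrak p}}H(\mathfrak p)$ through $C=\Hom_R(R_{\mathfrak p},H)$ and the isomorphism $C/\mathfrak pC\cong\Lambda^{\mathfrak p}C/\mathfrak p\Lambda^{\mathfrak p}C$, and conclude from right minimality (with $u=0$) of the flat cover $H(\mathfrak p)\to\overline A_{\mathfrak p}\otimes_{A_{\mathfrak p}}H(\mathfrak p)$. The completion fact you flag as the obstacle is the standard one, valid for arbitrary modules and finitely generated ideals, and is exactly what the paper takes from Kanda--Nakamura's Lemma A.4 with $\mathfrak a=\mathfrak b=\mathfrak p$; so you need neither the flatness detour nor your circular ``surjective whenever it is an isomorphism'' phrasing in Step 2, which should simply be replaced by that citation.
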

\begin{proof}
	Regard $H$ as a right $A^{\op}$-module. Since $A^{\op}$ is again a Noether $R$-algebra, \cite[Proposition~3.7]{KandaNakamura2022} gives a product decomposition
	\[
	H\cong\prod_{\mathfrak q\in\Spec R}H(\mathfrak q),
	\]
	where every $H(\mathfrak q)$ is $\mathfrak q$-local, $\mathfrak q$-complete, flat, and cotorsion. Since $H\ne0$, choose $\mathfrak p$ with $H(\mathfrak p)\ne0$. Put
	\[
	C=\Hom_R(R_{\mathfrak p},H).
	\]
	By \cite[Lemma~3.3(3)]{KandaNakamura2022},
	\begin{equation}\label{eq:completion-Hp}
		\Lambda^{\mathfrak p}C\cong H(\mathfrak p).
	\end{equation}
	The completion map $C\to\Lambda^{\mathfrak p}C$ induces an isomorphism modulo $\mathfrak p$. Indeed, \cite[Lemma~A.4]{KandaNakamura2022}, applied with $\mathfrak a=\mathfrak b=\mathfrak p$, gives
	\begin{equation}\label{eq:modp-completion}
		C/\mathfrak pC\xrightarrow{\ \cong\ }\Lambda^{\mathfrak p}C/\mathfrak p\Lambda^{\mathfrak p}C.
	\end{equation}
	By \eqref{eq:pJ}, $\mathfrak p$ annihilates the right $A_{\mathfrak p}$-module $\overline A_{\mathfrak p}$. Hence tensoring with $\overline A_{\mathfrak p}$ over $A_{\mathfrak p}$ factors through reduction modulo $\mathfrak p$, and \eqref{eq:modp-completion} yields
	\begin{equation}\label{eq:Tidentify}
		\overline A_{\mathfrak p}\otimes_{A_{\mathfrak p}}C
		\xrightarrow{\ \cong\ }
		\overline A_{\mathfrak p}\otimes_{A_{\mathfrak p}}\Lambda^{\mathfrak p}C
		\xrightarrow{\ \cong\ }
		\overline A_{\mathfrak p}\otimes_{A_{\mathfrak p}}H(\mathfrak p),
	\end{equation}
	where the second isomorphism uses \eqref{eq:completion-Hp}. The left-hand side of \eqref{eq:Tidentify} is $T_{\mathfrak p}(H)$.
	
	It remains to prove that the right-hand side is nonzero. Apply \cite[Proposition~4.5(1)]{KandaNakamura2022} to the Noether $R$-algebra $A^{\op}$. Since $(A^{\op})_{\mathfrak p}\cong A_{\mathfrak p}^{\op}$ and $\rad(A_{\mathfrak p}^{\op})=J_{\mathfrak p}^{\op}$, the canonical semisimple quotient is $\overline A_{\mathfrak p}^{\op}$. In right $A^{\op}$-module notation the resulting flat cover is
	\[
	H(\mathfrak p)\longrightarrow H(\mathfrak p)\otimes_{A_{\mathfrak p}^{\op}}\overline A_{\mathfrak p}^{\op}.
	\]
	Using the standard identification
	\[
	H(\mathfrak p)\otimes_{A_{\mathfrak p}^{\op}}\overline A_{\mathfrak p}^{\op}
	\xrightarrow{\ \cong\ }
	\overline A_{\mathfrak p}\otimes_{A_{\mathfrak p}}H(\mathfrak p),\qquad
	h\otimes\overline a^{\op}\longmapsto\overline a\otimes h,
	\]
	we obtain a flat cover
	\begin{equation}\label{eq:Hpcover}
		H(\mathfrak p)\longrightarrow\overline A_{\mathfrak p}\otimes_{A_{\mathfrak p}}H(\mathfrak p).
	\end{equation}
	The target of \eqref{eq:Hpcover} cannot be zero. If it were zero, then the zero map $H(\mathfrak p)\to0$ would be a flat cover. But the zero endomorphism of the nonzero module $H(\mathfrak p)$ satisfies $0\circ0=0$ and is not an automorphism, contradicting right minimality. Therefore
	\[
	\overline A_{\mathfrak p}\otimes_{A_{\mathfrak p}}H(\mathfrak p)\ne0.
	\]
	By \eqref{eq:Tidentify}, $T_{\mathfrak p}(H)\ne0$.
 \end{proof}

We shall also use a standard splitting argument.

\begin{lemma}\label{lem:splitting}
	Let $f:F\to N$ be a flat precover and let $g:G\to N$ be a flat cover. Then there is a decomposition
	\[
	F\cong G\oplus H
	\]
	under which
	\[
	f=(g,0):G\oplus H\longrightarrow N.
	\]
\end{lemma}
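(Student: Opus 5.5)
The plan is the standard comparison of a precover with a cover; it uses only the defining properties of flat precovers and covers. Since $f$ is a flat precover and $G$ is flat, the morphism $g$ factors through $f$: there is $\alpha:G\to F$ with $f\alpha=g$. Since $g$ is a flat precover and $F$ is flat, $f$ factors through $g$: there is $\beta:F\to G$ with $g\beta=f$. Then
\[
g(\beta\alpha)=f\alpha=g,
\]
so right minimality of the flat cover $g$ forces $\beta\alpha\in\End_A(G)$ to be an automorphism. Put $\gamma=(\beta\alpha)^{-1}$.

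Next, consider $\alpha':=\alpha\gamma:G\to F$. It satisfies $\beta\alpha'=\mathrm{id}_G$ and
\[
f\alpha'=f\alpha\gamma=g\gamma.
\]
Since $g\gamma^{-1}=g\beta\alpha=g$, we also have $g\gamma=g$, and hence $f\alpha'=g$. Thus $\alpha'$ is a split monomorphism with retraction $\beta$. The standard splitting then gives
\[
F=\alpha'(G)\oplus H,\qquad H=\Ker\beta.
\]
Identify $G$ with $\alpha'(G)$ via $\alpha'$. The restriction of $f$ to the first summand is $f\alpha'=g$, and on $H$ we have $f|_H=g\beta|_H=0$. Hence $f=(g,0)$ under $F\cong G\oplus H$.

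No step is a real obstacle. The only point needing care is the rescaling of $\alpha$ by $\gamma$, which makes the section compatible with both $\beta$ and $g$ at once. For that one uses $g\gamma=g$, which follows from $g\beta\alpha=g$ by composing with $\gamma$ on the right. If one wants $H$ flat, note that it is a direct summand of the flat module $F$, although the statement does not require this.
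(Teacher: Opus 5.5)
Your proof is correct and is the standard comparison argument behind the G\"obel--Trlifaj lemma (Lemma~5.8 of their book), which the paper cites in place of a proof. The rescaling by $\gamma$ is unnecessary: $\gamma\beta$ is already a retraction of $\alpha$, so $F=\alpha(G)\oplus\Ker\beta$ with $f\alpha=g$ and $f|_{\Ker\beta}=g\beta|_{\Ker\beta}=0$.
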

\begin{proof} It follows by \cite[Lemma 5.8]{GT12}.
\end{proof}

\begin{proposition}[Local minimality criterion]\label{prop:local-min}
	Let $f:F\to N$ be a flat precover of left $A$-modules. Assume that $F$ is flat cotorsion. If $T_{\mathfrak p}(f)$ is a monomorphism for every $\mathfrak p\in\Spec R$, then $f$ is a flat cover.
\end{proposition}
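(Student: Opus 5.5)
Let $g:G\to N$ be a flat cover of $N$, which exists by \cite{BicanElBashirEnochs2001}. By Lemma~\ref{lem:splitting} there is a decomposition $F\cong G\oplus H$ under which $f=(g,0)$. It suffices to show $H=0$: then $f$ is isomorphic to $g$ over $N$, and right minimality passes through this isomorphism, so $f$ is a flat cover.

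First, $H$ is a direct summand of $F$, and direct summands of flat cotorsion modules are again flat and cotorsion: flatness passes to summands, and $\Ext^1_A(-,H)$ is a summand of $\Ext^1_A(-,F)$. So if $H\ne0$, Lemma~\ref{lem:nonvanishing-top} (for left $A$-modules) gives a prime $\mathfrak p\in\Spec R$ with $T_{\mathfrak p}(H)\ne0$.

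Next, $T_{\mathfrak p}=\overline A_{\mathfrak p}\otimes_{A_{\mathfrak p}}\Hom_R(R_{\mathfrak p},-)$ is additive, as a composite of additive functors. Applying it to the decomposition therefore gives
\[
T_{\mathfrak p}(F)\cong T_{\mathfrak p}(G)\oplus T_{\mathfrak p}(H),\qquad T_{\mathfrak p}(f)=(T_{\mathfrak p}(g),0).
\]
The restriction of $T_{\mathfrak p}(f)$ to the nonzero summand $T_{\mathfrak p}(H)$ is zero. This contradicts the hypothesis that $T_{\mathfrak p}(f)$ is a monomorphism, so $H=0$.

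The only point needing care is the compatibility of the identification $F\cong G\oplus H$ with $f$, and that $T_{\mathfrak p}(f)$ is computed via that isomorphism. This holds by naturality of $T_{\mathfrak p}$. The real input is Lemma~\ref{lem:nonvanishing-top}, which is already available, so I expect no serious obstacle.
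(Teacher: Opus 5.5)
Your proposal is correct and follows the paper's proof essentially step for step. You split off a flat cover via Lemma~\ref{lem:splitting}, observe that the complement $H$ is flat cotorsion, detect $H\neq 0$ by some $T_{\mathfrak p}(H)\neq 0$ using Lemma~\ref{lem:nonvanishing-top}, and use additivity of $T_{\mathfrak p}$ to place that nonzero summand inside $\Ker T_{\mathfrak p}(f)$, which contradicts the monomorphism hypothesis.
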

\begin{proof}
	Let $g:G\to N$ be a flat cover; such a cover exists by \cite{BicanElBashirEnochs2001}. By Lemma~\ref{lem:splitting}, there is a decomposition
	\[
	F\cong G\oplus H
	\]
	under which $f=(g,0)$. Since $H$ is a direct summand of the flat cotorsion module $F$, the module $H$ is again flat and cotorsion.
	
	Assume $H\ne0$. By Lemma~\ref{lem:nonvanishing-top}, there exists $\mathfrak p\in\Spec R$ for which $T_{\mathfrak p}(H)\ne0$. Both functors $\Hom_R(R_{\mathfrak p},-)$ and $\overline A_{\mathfrak p}\otimes_{A_{\mathfrak p}}-$ preserve finite direct sums. Consequently
	\[
	T_{\mathfrak p}(F)\cong T_{\mathfrak p}(G)\oplus T_{\mathfrak p}(H).
	\]
	Since $f$ vanishes on $H$, the nonzero direct summand $T_{\mathfrak p}(H)$ is contained in $\Ker T_{\mathfrak p}(f)$, contradicting the assumed injectivity of $T_{\mathfrak p}(f)$. Hence $H=0$. Thus $f$ is isomorphic, as a morphism to $N$, to the flat cover $g$, and therefore $f$ is a flat cover.
\end{proof}

\begin{remark}
	The assertion of Proposition~\ref{prop:local-min} is deliberately one-sided. For the proof of the main theorem we only need that injectivity of all local top maps removes every redundant flat-cotorsion summand. No converse local criterion for arbitrary noncommutative Noether algebras is asserted here.
\end{remark}
The following example shows that the flat-cotorsion assumption in Proposition~\ref{prop:local-min} cannot be dropped.
\begin{example}
	Let $R=A=\mathbb Z$ and consider the zero morphism
	\[
	f:\mathbb Z\longrightarrow0.
	\]
	It is a flat precover, since $\mathbb Z$ is flat and every map to the zero module factors through $f$. For every $\mathfrak p\in\Spec\mathbb Z$ one has
	\[
	\Hom_{\mathbb Z}(\mathbb Z_{\mathfrak p},\mathbb Z)=0.
	\]
	Indeed, if $\mathfrak p=(0)$, the value of a homomorphism on $1$ must be divisible by every positive integer. If $\mathfrak p=(\ell)$, choose an integer $s>1$ not divisible by $\ell$; the value on $1$ must then be divisible by $s^n$ for every $n\ge1$. In either case the value is zero, and a homomorphism $\mathbb Z_{\mathfrak p}\to\mathbb Z$ is determined by that value. Hence
	\[
	T_{\mathfrak p}(\mathbb Z)=0
	\]
	for every $\mathfrak p$, and every $T_{\mathfrak p}(f):0\to0$ is monic. But $f$ is not a flat cover, because the zero endomorphism of the nonzero source $\mathbb Z$ satisfies $f0=f$ and is not invertible. Thus the flat-cotorsion assumption in Proposition~\ref{prop:local-min} cannot be dropped. Consistently, $\mathbb Z$ is not cotorsion; classically $\Ext^1_{\mathbb Z}(\mathbb Q,\mathbb Z)\ne0$; see \cite[Section~54]{Fuchs1970}.
\end{example}

\section{Duality between injective envelopes and flat covers}
We can now prove the main result.

\begin{theorem}[Noether-algebra extension of Puuska's theorem]\label{thm:main}
	Let $R$ be a commutative noetherian ring, let $A$ be a Noether $R$-algebra, and let $i:M\to I$ be a morphism of right $A$-modules. The following conditions are equivalent:
	\begin{enumerate}[label=(\arabic*)]
		\item $i$ is an injective envelope in $\Mod A$;
		\item for some injective cogenerator $E$ of $\Mod R$, the morphism
		\[
		D_E(i)=\Hom_R(i,E):\Hom_R(I,E)\longrightarrow\Hom_R(M,E)
		\]
		is a flat cover in $\Mod A^{\op}$;
		\item for every injective $R$-module $E$, the morphism $D_E(i)$ is a flat cover in $\Mod A^{\op}$.
	\end{enumerate}
\end{theorem}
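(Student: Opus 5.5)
I would prove the cycle $(1)\Rightarrow(3)\Rightarrow(2)\Rightarrow(1)$. The step $(3)\Rightarrow(2)$ needs nothing: $\Mod R$ has an injective cogenerator, for instance $E=\prod_{\mathfrak m}E_R(R/\mathfrak m)$, and (3) applies to it.

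For $(1)\Rightarrow(3)$, fix an injective $R$-module $E$ and assume $i$ is an injective envelope. By (D2), $D_E(I)$ is flat and cotorsion. The next task is to show that $D_E(i)$ is a flat precover. Let $F$ be a flat left $A$-module. Then $D_E(i)$ is surjective, since $i$ is monic and $D_E$ is exact. Its kernel is $D_E(\Coker i)$, which is cotorsion by (D1). Hence $\Ext^1_{A^{\op}}(F,D_E(\Coker i))=0$, so every map $F\to D_E(M)$ lifts along $D_E(i)$, and $D_E(i)$ is a flat precover.

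For minimality I would apply Proposition~\ref{prop:local-min}. It suffices to show that $T_{\mathfrak p}(D_E(i))$ is monic for every $\mathfrak p$. By the naturality in Lemma~\ref{lem:socle-top}, this map is identified with $D_E(S_{\mathfrak p}(i))$. By Proposition~\ref{prop:local-inj}, $S_{\mathfrak p}(i)$ is an isomorphism, hence so is its dual, and Proposition~\ref{prop:local-min} gives that $D_E(i)$ is a flat cover. This is exactly why only the one-sided minimality criterion is needed.

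For $(2)\Rightarrow(1)$, let $E$ be an injective cogenerator with $D_E(i)$ a flat cover. I would proceed in three steps.

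\begin{enumerate}[label=(\alph*)]
\item $D_E(I)$ is flat, so $I$ is injective by (D3).
\item A flat cover is an epimorphism, so $D_E(i)$ is surjective. Exactness gives $D_E(\Ker i)=\Coker D_E(i)=0$. Since $E$ is a cogenerator, $\Ker i=0$ and $i$ is monic.
\item Let $e:M\to E_A(M)$ be the injective envelope. As in the proof of Proposition~\ref{prop:local-inj}, $I=u(E_A(M))\oplus I'$ with $ue=i$. Dualizing gives $D_E(I)\cong D_E(E_A(M))\oplus D_E(I')$, and under this decomposition $D_E(i)=(D_E(e),0)$, because $i$ lands in the first summand.
\end{enumerate}

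It remains to show $I'=0$. The endomorphism of $D_E(I)$ that is the identity on $D_E(E_A(M))$ and zero on $D_E(I')$ composes with $D_E(i)$ to give $D_E(i)$. Right minimality forces it to be an automorphism, so $D_E(I')=0$. Since $E$ is a cogenerator, $I'=0$, and therefore $i\cong e$ is an injective envelope.

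The main obstacle is the naturality bookkeeping in $(1)\Rightarrow(3)$. One must check that the isomorphism of Lemma~\ref{lem:socle-top}, which is natural in $X$, really identifies $T_{\mathfrak p}(D_E(i))$ with $D_E(S_{\mathfrak p}(i))$ on the nose, including the direction reversal. Everything else is formal.
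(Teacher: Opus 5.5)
Your proposal is correct and follows the paper's proof essentially step for step: the precover argument via (D1)/(D2), then Proposition~\ref{prop:local-min} fed by Proposition~\ref{prop:local-inj} and the naturality square of Lemma~\ref{lem:socle-top}, and for $(2)\Rightarrow(1)$ the use of (D3) and the cogenerator property to get $I$ injective and $i$ monic. The only difference is cosmetic: you apply right minimality of $D_E(i)$ directly to $D_E$ of the idempotent projection of $I$ onto $u(E_A(M))$, whereas the paper first proves that $i$ is left minimal and then applies this to the same idempotent $ur$.
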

\begin{proof}
	$(1)\Rightarrow(3)$. Assume that $i$ is an injective envelope and let $E$ be an arbitrary injective $R$-module. Put
	\[
	F=D_E(I),\qquad N=D_E(M),\qquad f=D_E(i).
	\]
	Let $C=\Coker i$. Since $i$ is monic, we have a short exact sequence
	\[
	0\longrightarrow M\xrightarrow{i}I\longrightarrow C\longrightarrow0.
	\]
	Exactness of $D_E$ gives a short exact sequence of left $A$-modules
	\begin{equation}\label{eq:dualshort}
		0\longrightarrow D_E(C)\longrightarrow F\xrightarrow{f}N\longrightarrow0.
	\end{equation}
	By (D2), $F=D_E(I)$ is flat cotorsion. By (D1), $D_E(C)$ is pure-injective and hence cotorsion.
	
	We claim that $f$ is a flat precover. Let $L$ be a flat left $A$-module. Applying $\Hom_{A^{\op}}(L,-)$ to \eqref{eq:dualshort} gives the exact segment
	\[
	\Hom_{A^{\op}}(L,F)\longrightarrow\Hom_{A^{\op}}(L,N)
	\longrightarrow\Ext^1_{A^{\op}}(L,D_E(C)).
	\]
	The last term vanishes because $D_E(C)$ is cotorsion. Hence every map $L\to N$ factors through $f$, proving the claim.
	
	By Proposition~\ref{prop:local-inj}, the morphism
	\[
	S_{\mathfrak p}(i):S_{\mathfrak p}(M)\longrightarrow S_{\mathfrak p}(I)
	\]
	is an isomorphism for every $\mathfrak p\in\Spec R$. The naturality of Lemma~\ref{lem:socle-top} gives a commutative square
	\[
	\begin{CD}
		T_{\mathfrak p}(D_EI) @>{\cong}>> D_E(S_{\mathfrak p}(I))\\
		@V{T_{\mathfrak p}(f)}VV @VV{D_E(S_{\mathfrak p}(i))}V\\
		T_{\mathfrak p}(D_EM) @>{\cong}>> D_E(S_{\mathfrak p}(M)).
	\end{CD}
	\]
	The right vertical map is an isomorphism because $S_{\mathfrak p}(i)$ is an isomorphism. Hence $T_{\mathfrak p}(f)$ is an isomorphism, in particular a monomorphism, for every $\mathfrak p$. Since $F$ is flat cotorsion and $f$ is a flat precover, Proposition~\ref{prop:local-min} shows that $f$ is a flat cover. Thus (3) holds.
	
	$(3)\Rightarrow(2)$. The category $\Mod R$ has an injective cogenerator. Applying (3) to any such injective cogenerator gives (2).
	
	$(2)\Rightarrow(1)$. Let $E$ be an injective cogenerator such that
	\[
	f=D_E(i):D_E(I)\longrightarrow D_E(M)
	\]
	is a flat cover. A flat cover is an epimorphism and its source is flat. Thus $D_E(I)$ is flat. By (D3), $I$ is injective.
	
	We next prove that $i$ is a monomorphism. Let $K=\Ker i$. Since $D_E$ is exact, the exact sequence associated to $i$ gives
	\[
	\Coker f\cong D_E(K).
	\]
	Because $f$ is an epimorphism, $D_E(K)=0$. As $E$ is a cogenerator, $\Hom_R(K,E)=0$ implies $K=0$. Hence $i$ is monic.
	
	It remains to prove that $i$ is left minimal. Let $a\in\End_A(I)$ satisfy $ai=i$. Contravariance gives
	\[
	fD_E(a)=D_E(i)D_E(a)=D_E(ai)=D_E(i)=f.
	\]
	Since $f$ is a flat cover, right minimality implies that $D_E(a)$ is an automorphism. We now use the cogenerator hypothesis to reflect this isomorphism. Because $D_E$ is exact, if $D_E(a)$ is an isomorphism then
	\[
	D_E(\Ker a)=0\qquad\text{and}\qquad D_E(\Coker a)=0.
	\]
	Since $E$ is a cogenerator, this forces $\Ker a=0=\Coker a$. Thus $a$ is an automorphism. Hence $i$ is a left-minimal monomorphism into an injective module.
	
	For completeness, we verify that such a morphism is an injective envelope. Let $e:M\to E_A(M)$ be the injective envelope. Since $I$ is injective, there is $u:E_A(M)\to I$ with $ue=i$. As in the proof of Proposition~\ref{prop:local-inj}, essentiality of $e(M)$ and monicity of $i$ imply that $u$ is monic. Since $E_A(M)$ is injective, $u$ splits; choose $r:I\to E_A(M)$ with $ru=1_{E_A(M)}$. Then $ur\in\End_A(I)$ is idempotent and
	\[
	(ur)i=urue=ue=i.
	\]
	Left minimality of $i$ implies that $ur$ is an automorphism. An idempotent automorphism is the identity, so $ur=1_I$. Thus $u$ is also surjective and hence is an isomorphism. Therefore $i$ is isomorphic to the injective envelope $e$, and (1) follows.
\end{proof}

\begin{remark}
	The full local-complete structure theory is needed only in the implication $(1)\Rightarrow(3)$, more precisely in Lemma~\ref{lem:nonvanishing-top}, where a nonzero flat cotorsion direct summand is detected by one of the local top functors $T_{\mathfrak p}$. The reverse implication for an injective cogenerator is formal once one knows the flatness--injectivity duality in \cite[Proposition~2.2]{KandaNakamura2022}.
	
	If $A=R$ is commutative, then
	\[
	\overline A_{\mathfrak p}=R_{\mathfrak p}/\mathfrak pR_{\mathfrak p}=\kappa(\mathfrak p),
	\]
	and the functors $S_{\mathfrak p}$ and $T_{\mathfrak p}$ are precisely the local functors used in Puuska's proof. Thus Theorem~\ref{thm:main} recovers \cite[Theorem~1.1]{Puuska2024}.
\end{remark}

\section{Minimal resolutions and special cases}
We first state the resolution-theoretic consequence with the successive cosyzygies made explicit. Here a flat resolution
\[
\cdots\longrightarrow F_2\longrightarrow F_1\longrightarrow F_0\longrightarrow N\longrightarrow0
\]
is called minimal if, at every degree, the induced epimorphism from $F_n$ onto the corresponding syzygy (with $N$ as the zeroth syzygy) is a flat cover. This is the standard cover-theoretic convention of \cite[Chapter~1]{Xu1996}.

\begin{corollary}[Minimal resolutions]\label{cor:minres}
	Let
	\[
	0\longrightarrow M\longrightarrow I^0\longrightarrow I^1\longrightarrow I^2\longrightarrow\cdots
	\]
	be an injective resolution of a right $A$-module $M$. The following are equivalent:
	\begin{enumerate}[label=(\arabic*)]
		\item the injective resolution is minimal;
		\item for some injective cogenerator $E$ of $\Mod R$, the exact complex
		\[
		\cdots\longrightarrow D_E(I^2)\longrightarrow D_E(I^1)\longrightarrow D_E(I^0)
		\longrightarrow D_E(M)\longrightarrow0
		\]
		is a minimal flat resolution in $\Mod A^{\op}$;
		\item  the exact complex
		\[
		\cdots\longrightarrow D_E(I^2)\longrightarrow D_E(I^1)\longrightarrow D_E(I^0)
		\longrightarrow D_E(M)\longrightarrow0
		\]
		is a minimal flat resolution in $\Mod A^{\op}$ for every injective $R$-module $E$.
	\end{enumerate}
\end{corollary}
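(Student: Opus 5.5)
The plan is to reduce Corollary~\ref{cor:minres} to Theorem~\ref{thm:main}, applied degreewise to the cosyzygy short exact sequences. First I will make the cosyzygies explicit. Put $C^0=M$ and let $C^{n}$ be the image of $I^{n-1}\to I^{n}$ for $n\ge1$; write $i^n:C^n\to I^n$ for the inclusion. The resolution then splits into short exact sequences
\[
0\longrightarrow C^n\xrightarrow{i^n} I^n\longrightarrow C^{n+1}\longrightarrow0,
\]
and by definition the injective resolution is minimal exactly when every $i^n$ is an injective envelope.

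Next I will dualize. Because $D_E$ is exact for every injective $R$-module $E$, the dual complex is exact, and each sequence above becomes
\[
0\to D_E(C^{n+1})\to D_E(I^n)\xrightarrow{D_E(i^n)}D_E(C^n)\to0 .
\]
It follows that the image of $D_E(I^{n+1})\to D_E(I^n)$ is $D_E(C^{n+1})$, since $D_E(I^{n+1})\to D_E(C^{n+1})$ is onto. So the $n$-th syzygy of the dual complex, with $D_E(M)$ as the zeroth one, is $D_E(C^n)$, and the induced epimorphism from $D_E(I^n)$ onto it is exactly $D_E(i^n)$. This identification is the only step that needs care. One has to check that the convention for minimal flat resolutions from \cite[Chapter~1]{Xu1996} refers precisely to these maps, and that a syzygy is determined by exactness independently of the choice of $E$. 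Both points follow from exactness of $D_E$.

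With this in place, the equivalences follow degreewise. The dual complex is a minimal flat resolution exactly when every $D_E(i^n)$ is a flat cover. For $(1)\Rightarrow(3)$, apply Theorem~\ref{thm:main} $(1)\Rightarrow(3)$ to each $i^n$. The flat covers $D_E(i^n)$ have flat sources $D_E(I^n)$, so the dual complex is a flat resolution, and it is minimal. For $(3)\Rightarrow(2)$, specialize to an injective cogenerator, which exists in $\Mod R$. For $(2)\Rightarrow(1)$, each $D_E(i^n)$ is a flat cover for the cogenerator $E$, so Theorem~\ref{thm:main} $(2)\Rightarrow(1)$ shows that every $i^n$ is an injective envelope. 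Hence the resolution is minimal.
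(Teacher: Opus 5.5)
Your proposal is correct and follows essentially the same route as the paper. Both arguments split the resolution into cosyzygy sequences and dualize them with the exact functor $D_E$. Both then identify the epimorphism from $D_E(I^n)$ onto the $n$-th syzygy of the dual complex with $D_E(i^n)$, and apply Theorem~\ref{thm:main} degree by degree.
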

\begin{proof}
	Define the cosyzygies by $C^0=M$ and, for $n\ge0$, let $C^{n+1}$ be the cokernel of the canonical inclusion $C^n\hookrightarrow I^n$. Thus for every $n\ge0$ there is a short exact sequence
	\begin{equation}\label{eq:cosyz}
		0\longrightarrow C^n\xrightarrow{j_n}I^n\longrightarrow C^{n+1}\longrightarrow0.
	\end{equation}
	The injective resolution is minimal if and only if every $j_n$ is an injective envelope.
	
	Let $E$ be injective over $R$. Exactness of $D_E$ transforms \eqref{eq:cosyz} into
	\begin{equation}\label{eq:dual-cosyz}
		0\longrightarrow D_E(C^{n+1})\longrightarrow D_E(I^n)\xrightarrow{D_E(j_n)}D_E(C^n)\longrightarrow0.
	\end{equation}
	In the dual complex, the image of the differential $D_E(I^n)\to D_E(I^{n-1})$ is exactly the embedded copy of $D_E(C^n)$ appearing in the dual of the preceding cosyzygy sequence. Indeed, the original differential $I^{n-1}\to I^n$ factors as
	\[
	I^{n-1}\twoheadrightarrow C^n\xrightarrow{j_n}I^n,
	\]
	so its dual factors as
	\[
	D_E(I^n)\xrightarrow{D_E(j_n)}D_E(C^n)\hookrightarrow D_E(I^{n-1}),
	\]
	and $D_E(j_n)$ is epic by \eqref{eq:dual-cosyz}. At degree zero the corresponding target is $D_E(C^0)=D_E(M)$.
	
	Consequently, the map from the $n$-th flat term onto the $n$-th image of the dual resolution is exactly $D_E(j_n)$. By Theorem~\ref{thm:main}, $j_n$ is an injective envelope if and only if $D_E(j_n)$ is a flat cover for some injective cogenerator, equivalently for every injective $R$-module. Applying this degree by degree proves the equivalence of (1), (2), and (3).
\end{proof}

\appendix
\setcounter{equation}{0}
\renewcommand{\theequation}{A.\arabic{equation}}
\section{ The classical proof of Artin algebra case}\label{app:artin}
Throughout this appendix, $R$ is a commutative Artinian ring and $A$ is an \emph{Artin $R$-algebra}: the image of $R$ lies in $Z(A)$ and $A$ is finitely generated as an $R$-module. In this appendix, we will gvie a self-contained proof of Theorem \ref{thm:main} for the  Artin-algebra case case by the classical methods. Put
\[
\mathfrak r=J(R),\qquad J=\rad A,\qquad \overline A=A/J.
\]
Then $A$ is left and right Artinian (hence left and right Noetherian), and $J$ is nilpotent. Let
\[
E_R=E_R(R/\mathfrak r)
\]
be the injective envelope of the semisimple $R$-module $R/\mathfrak r$. It is an injective cogenerator of $\Mod R$. The standard Artin--Matlis dual is
\begin{equation}\label{eq:artinD}
	D(-)=\Hom_R(-,E_R).
\end{equation}
More generally, for an injective $R$-module $Q$ we write
\[
D_Q(-)=\Hom_R(-,Q).
\]
Unless finite generation is explicitly imposed, modules in this appendix are allowed to be arbitrary. For a right $A$-module $X$ and a left $A$-module $Y$, set
\begin{equation}\label{eq:artin-soc-top}
	\Soc_A X=\Hom_A(\overline A,X)\cong\{x\in X\mid xJ=0\},
	\qquad
	\Top_A Y=\overline A\otimes_A Y\cong Y/JY.
\end{equation}
We write $\mathrm{mod}\,A$ for finitely generated right $A$-modules and $A\text{-}\mathrm{mod}$ for finitely generated left $A$-modules. Since $R$ is Artinian and $A$ is module-finite over $R$, these are precisely the finite-length $A$-modules on the respective sides. The standard Artin-algebra facts about injective envelopes, projective covers, and the duality \eqref{eq:artinD} may be found in \cite[Chapters~10 and 27]{AndersonFuller1992}.

\subsection{The base-ring duality and the socle--top formula}
We begin by recording the exactness and faithfulness that replace the elementary vector-space facts used over a field.

\begin{lemma}[Duality over Artinian rings]\label{lem:artin-base-duality}
	Let $Q$ be an injective $R$-module.
	\begin{enumerate}[label=(\roman*)]
		\item $D_Q=\Hom_R(-,Q)$ is exact and contravariant.
		\item If $Q$ is an injective cogenerator, then $D_Q$ is faithful and reflects zero objects and isomorphisms.
		\item For $Q=E_R$, the evaluation morphism $\delta_L:L\to D^2L$ is an isomorphism for every finite-length $R$-module $L$. Consequently
		\[
		D:\mathrm{mod}\,A\xrightarrow{\ \sim\ }(A\text{-}\mathrm{mod})^{\op}
		\]
		is an exact contravariant duality.
	\end{enumerate}
\end{lemma}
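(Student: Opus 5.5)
Part (i) is formal. For (ii), faithfulness means that $D_Q(\varphi)=0$ forces $\varphi=0$; reflecting zero objects and isomorphisms then follows from exactness. Part (iii) is the classical Matlis duality for finite-length modules over an Artinian ring, and I would prove it by induction on length.

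For (i), $Q$ is injective, so $\Hom_R(-,Q)$ takes short exact sequences to short exact sequences. It is contravariant by construction. When $X$ is an $A$-module, the $A$-action on $D_Q X$ via $(a\varphi)(x)=\varphi(xa)$ is compatible with the induced maps, so the exactness statement holds for $A$-modules as well.

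For (ii), I would use the definition of a cogenerator: every nonzero $R$-module $L$ admits a nonzero map $L\to Q$, so $D_Q(L)\ne0$ whenever $L\ne0$. Given a morphism $\varphi$ with $D_Q(\varphi)=0$, factor $\varphi$ through its image. Then $D_Q(\varphi)$ is the composite of the injection $D_Q(\varphi)\colon D_Q(\im\varphi)\to D_Q(X)$ with the surjection onto $D_Q(\im\varphi)$. Both claims follow from exactness, so $D_Q(\im\varphi)=0$, hence $\im\varphi=0$, and $D_Q$ is faithful. For isomorphisms, if $D_Q(\varphi)$ is an isomorphism then exactness gives $D_Q(\Ker\varphi)\cong\Coker D_Q(\varphi)=0$ and $D_Q(\Coker\varphi)\cong\Ker D_Q(\varphi)=0$. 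Reflecting zero objects then makes $\varphi$ bijective. This is the same argument already used in the proof of Theorem~\ref{thm:main}.

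For (iii), I would first show that $E_R$ is an injective cogenerator. $R$ is Artinian, so $\mathfrak r$ is nilpotent and every nonzero $R$-module $L$ has a nonzero socle. That socle contains a simple module $R/\mathfrak m$, and $R/\mathfrak m$ embeds in $R/\mathfrak r\subseteq E_R$, which is semisimple as $R/\mathfrak r\cong\prod R/\mathfrak m$. Injectivity of $E_R$ extends this embedding to a nonzero map $L\to E_R$. Next, $\delta_L$ is natural, and both $L$ and $D^2L$ are exact functors of $L$. By the five lemma and induction on length, it then suffices to treat a simple module $L=R/\mathfrak m$.

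The key computation, which I expect to be the main obstacle, is $D(R/\mathfrak m)\cong\Hom_R(R/\mathfrak m,E_R)$. This is the $\mathfrak m$-socle of $E_R$, and since $R/\mathfrak r\subseteq E_R$ is essential, it equals the $\mathfrak m$-isotypic part of $\Soc E_R=R/\mathfrak r$, namely one copy of $R/\mathfrak m$. So $D(R/\mathfrak m)\cong R/\mathfrak m$. Consequently $D^2(R/\mathfrak m)\cong R/\mathfrak m$ as well, and $\delta$ is a nonzero map between simple modules, hence an isomorphism; it is nonzero because $E_R$ is a cogenerator. The induction also shows that $D$ preserves length, so it sends finite-length modules to finite-length modules.

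Finally, $D$ sends $\mathrm{mod}\,A$ to $A\text{-}\mathrm{mod}$, because finite-length $A$-modules are exactly the finite-length $R$-modules with an $A$-action. The map $\delta$ is $A$-linear on both sides, so it gives natural isomorphisms $1\cong D^2$ on each side. Therefore $D$ is an exact contravariant duality.
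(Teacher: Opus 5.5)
Your proposal is correct and follows essentially the same route as the paper. Both arguments get (i)--(ii) from exactness together with the cogenerator property. For (iii), both induct on length via the five lemma, reducing to a simple $S$, where $D(S)$ is simple and $\delta_S$ is a nonzero map between simples.

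You also check two things the paper takes for granted: that $E_R$ is a cogenerator, and that $D$ preserves length. One notational slip in (ii): the injection $D_Q(\im\varphi)\to D_Q(X)$ is $D_Q$ of the corestriction $X\twoheadrightarrow\im\varphi$, not $D_Q(\varphi)$.
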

\begin{proof}
	Part (i) is the defining exactness property of an injective module.
	
	For (ii), suppose $0\ne L$ is an $R$-module. Since $R$ is Artinian, its Jacobson radical $\mathfrak r$ is nilpotent. Choose $0\ne x\in L$ and choose $m$ maximal with $x\mathfrak r^m\ne0$. Then $x\mathfrak r^m$ is annihilated by $\mathfrak r$, so it is a nonzero module over the semisimple ring $R/\mathfrak r$ and therefore contains a simple submodule $S$. Because $Q$ is a cogenerator, there is a nonzero map $S\to Q$; injectivity of $Q$ extends it along $S\hookrightarrow L$ to a nonzero map $L\to Q$. Thus $D_Q(L)\ne0$, proving faithfulness and reflection of zero objects. Since $D_Q$ is exact, if $D_Q(f)$ is an isomorphism then $D_Q(\Ker f)=D_Q(\Coker f)=0$, whence $\Ker f=\Coker f=0$; thus $f$ is an isomorphism.
	
	For (iii), note first that $R/\mathfrak r$ is a finite direct sum of representatives of the simple $R$-modules, and $E_R=E_R(R/\mathfrak r)$ is the corresponding finite direct sum of their injective envelopes. If $S$ is simple, then $D(S)=\Hom_R(S,E_R)$ is simple and the evaluation map $S\to D^2S$ is nonzero; hence it is an isomorphism. Now let $L$ have finite length. An induction on the length, using a short exact sequence $0\to L'\to L\to S\to0$ with $S$ simple, exactness of $D$, and the five lemma applied to the evaluation diagram, shows that $\delta_L:L\to D^2L$ is an isomorphism. Every finitely generated $A$-module has finite $R$-length, because $A$ is finite over the Artinian ring $R$. Therefore $D$ carries finitely generated modules to finitely generated modules on the opposite side and $D^2\cong1$ there, giving the asserted duality.
\end{proof}

\begin{lemma}[Classical socle--top duality]\label{lem:artin-soc-top}
	Let $Q$ be an injective $R$-module and let $X$ be a right $A$-module. Evaluation induces a natural isomorphism
	\begin{equation}\label{eq:artin-soc-top-dual}
		\Top_A(D_QX)\xrightarrow{\ \cong\ }D_Q(\Soc_A X).
	\end{equation}
	For $Q=E_R$, this is the standard Artin-algebra socle--top duality.
\end{lemma}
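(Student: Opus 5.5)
I will prove this exactly as in the proof of Lemma~\ref{lem:socle-top}, but with no localization or colocalization: it is a special case of the map $\Theta_{B,X}$ there, taken with $B=\overline A$ over the ring $A$ itself.

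First I identify both sides concretely. Since $\overline A=A/J$, the right side is $D_Q(\Soc_A X)=\Hom_R(\Hom_A(\overline A,X),Q)$. The left side is $\Top_A(D_QX)=\overline A\otimes_A\Hom_R(X,Q)$, where $D_QX$ carries the left action $(a\lambda)(x)=\lambda(xa)$. I then define
\[
\Theta:\overline A\otimes_A D_Q X\longrightarrow \Hom_R(\Hom_A(\overline A,X),Q),\qquad \Theta(\bar b\otimes\lambda)(g)=\lambda(g(\bar b)).
\]
The balancing check $\lambda(g(\bar b a))=(a\lambda)(g(\bar b))$ is the same one-line computation as before. Naturality in $X$ and $Q$ is evident from the evaluation formula. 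In the identifications $\Soc_AX=\{x:xJ=0\}$ and $\Top_AY=Y/JY$, the map becomes $\overline{\lambda}\mapsto\lambda|_{\Soc X}$.

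To show that $\Theta$ is bijective, I take a finite presentation $A^m\xrightarrow{\delta}A^n\to\overline A\to0$. This exists because $A$ is right Noetherian, or simply because $J$ is finitely generated as an $R$-module. Applying $\Hom_A(-,X)$ gives the left exact sequence $0\to\Hom_A(\overline A,X)\to X^n\to X^m$. Exactness of $D_Q$ (Lemma~\ref{lem:artin-base-duality}(i)) turns this into the right exact sequence
\[
D_Q(X)^m\to D_Q(X)^n\to D_Q(\Hom_A(\overline A,X))\to0.
\]
Tensoring the presentation with $D_QX$ gives the right exact sequence $D_Q(X)^m\to D_Q(X)^n\to\overline A\otimes_A D_QX\to0$. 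The two maps $D_Q(X)^m\to D_Q(X)^n$ agree by the definition of the left action. For free modules $\Theta$ is the canonical isomorphism $A\otimes_A D_QX\cong D_QX\cong D_Q(\Hom_A(A,X))$. So $\Theta$ is the induced map on cokernels and is an isomorphism.

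The only point requiring care is the bookkeeping that the two first maps coincide, that is, that dualizing $\Hom_A(\delta,X)$ matches $\delta\otimes D_QX$ under $D_Q(X^r)\cong D_Q(X)^r$. No Artinian hypothesis is needed beyond finite presentation of $\overline A$. The final remark for $Q=E_R$ is that, restricted to finite-length $X$, this recovers the standard duality of Lemma~\ref{lem:artin-base-duality}(iii) exchanging socles and tops.
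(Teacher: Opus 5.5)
Your proposal is correct and matches the paper's proof: it uses the same evaluation map $\Theta$, the same balancing check, and compares the two right-exact sequences obtained from a finite presentation of $\overline A$. Your alternative justification of finite presentation is also valid: $J$ is a finitely generated $R$-module, hence a finitely generated right ideal of $A$.
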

\begin{proof}
	Define
	\[
	\Theta_X:\overline A\otimes_A D_Q(X)\longrightarrow D_Q(\Hom_A(\overline A,X))
	\]
	by
	\begin{equation}\label{eq:artin-theta}
		\Theta_X(\overline a\otimes\lambda)(g)=\lambda(g(\overline a)).
	\end{equation}
	If $b\in A$, then
	\[
	\Theta_X(\overline a b\otimes\lambda)(g)
	=\lambda(g(\overline a)b)
	=(b\lambda)(g(\overline a))
	=\Theta_X(\overline a\otimes b\lambda)(g),
	\]
	where $(b\lambda)(x)=\lambda(xb)$. Thus \eqref{eq:artin-theta} is balanced.
	
	Since $A$ is right Noetherian, the cyclic right $A$-module $\overline A=A/J$ is finitely presented. Choose a finite presentation
	\[
	A^r\longrightarrow A^s\longrightarrow\overline A\longrightarrow0.
	\]
	Applying $\Hom_A(-,X)$ and then the exact functor $D_Q$ gives a right-exact sequence whose first two terms are $D_Q(X)^r$ and $D_Q(X)^s$. Tensoring the presentation with $D_Q(X)$ gives a right-exact sequence with the same first two terms and the same matrix map, because $(b\lambda)(x)=\lambda(xb)$. The induced map on cokernels is precisely $\Theta_X$, so it is an isomorphism. Naturality is immediate from the evaluation formula.
\end{proof}

\subsection{Injectives and projectives under Artin duality}
The following form is valid for arbitrary modules and avoids any unjustified use of reflexivity outside the finite-length subcategories.

\begin{lemma}[Injective--projective correspondence]\label{lem:artin-inj-proj}
	Let $Q$ be an injective $R$-module and let $X$ be a right $A$-module.
	\begin{enumerate}[label=(\roman*)]
		\item If $X$ is injective, then $D_Q(X)$ is a projective left $A$-module.
		\item If $Q$ is an injective cogenerator and $D_Q(X)$ is flat (equivalently, projective), then $X$ is injective.
		\item In particular,
		\[
		X\text{ is injective}\quad\Longleftrightarrow\quad D(X)\text{ is projective}.
		\]
	\end{enumerate}
\end{lemma}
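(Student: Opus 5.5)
For (i) I will use tensor--Hom adjunction and argue in the Artinian setting without appealing to (D2). For an injective right $A$-module $X$ and a left $A$-module $Y$, adjunction gives a natural isomorphism
\[
\Hom_{A^{\op}}(Y,D_Q X)=\Hom_{A^{\op}}(Y,\Hom_R(X,Q))\cong\Hom_R(X\otimes_A Y,Q)\cong\Hom_A(X,D_Q Y).
\]
The plan is to show first that $D_Q(X)$ is flat, and then that it is projective. Flatness comes from the natural isomorphism $D_Q(X)\otimes_A ? $ replaced by its character form: for a finitely presented right $A$-module $B$, the evaluation map $B\otimes_A D_Q(X)\to D_Q(\Hom_A(B,X))$ is an isomorphism. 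This is the same five-lemma/presentation argument as in Lemma~\ref{lem:artin-soc-top}, with $B$ in place of $\overline A$. Given this, an injection $B'\hookrightarrow B$ of finitely generated right modules is carried by $\Hom_A(-,X)$ to a surjection, because $X$ is injective. The exact functor $D_Q$ then turns that surjection into an injection, so $-\otimes_A D_Q(X)$ preserves monomorphisms between finitely presented modules. Hence $D_Q(X)$ is flat.

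To get projectivity I will use that $A$ is right Artinian, hence right perfect. By Bass's theorem, every flat left module over a right perfect ring is projective. More precisely, for a left perfect ring, flat left modules are projective. Since $A$ is an Artin algebra, it is both left and right perfect, so either side is covered. This completes (i).

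For (ii), if $D_Q(X)$ is flat, I will dualize once more via Lambek's criterion. Take a finitely generated right ideal $\mathfrak b\subseteq A$. The isomorphism $B\otimes_A D_Q(X)\cong D_Q(\Hom_A(B,X))$, applied to $B=\mathfrak b$ and $B=A$ (both finitely presented since $A$ is noetherian), fits into a commutative square with the inclusion $\mathfrak b\to A$. Flatness makes the left column injective. Therefore $D_Q$ of the restriction map $\Hom_A(A,X)\to\Hom_A(\mathfrak b,X)$ is injective. Because $Q$ is an injective cogenerator, Lemma~\ref{lem:artin-base-duality}(ii) shows that $D_Q$ reflects epimorphisms: the cokernel $K$ has $D_Q(K)=0$, so $K=0$. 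Hence the restriction map is surjective, and Baer's criterion (right ideals are finitely generated) gives that $X$ is injective. The parenthetical ``equivalently projective'' is justified by the perfectness remark above.

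Part (iii) follows by combining (i) and (ii) with $Q=E_R$, which is an injective cogenerator.

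The main obstacle is the step from flat to projective. I would cite Bass's theorem on perfect rings, noting that Artin algebras are semiprimary, so flat modules on either side are projective.
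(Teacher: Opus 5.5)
Your proof is correct and is essentially the paper's argument. The finite-presentation isomorphism $B\otimes_A D_Q(X)\cong D_Q(\Hom_A(B,X))$ converts surjectivity of restriction maps (Baer's criterion) into injectivity of the ideal tensor maps (the flatness test) and back, the cogenerator property gives (ii), and Bass's theorem upgrades flat to projective. The opening adjunction is never used, and the sentence ``every flat left module over a right perfect ring is projective'' is false in general, since Bass's theorem pairs left perfect with flat left modules. Your next sentence correctly repairs this, because an Artin algebra is perfect on both sides.
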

\begin{proof}
	Let $L$ be a right ideal of $A$. Since $A$ is right Noetherian, $L$ is finitely presented. The same finite-presentation argument as in Lemma~\ref{lem:artin-soc-top} gives a natural isomorphism
	\begin{equation}\label{eq:ideal-dual}
		L\otimes_A D_Q(X)\xrightarrow{\ \cong\ }D_Q(\Hom_A(L,X)).
	\end{equation}
	Under \eqref{eq:ideal-dual}, the morphism induced by $L\hookrightarrow A$ corresponds to the dual of the restriction map
	\begin{equation}\label{eq:restriction}
		\rho_L:\Hom_A(A,X)\longrightarrow\Hom_A(L,X).
	\end{equation}
	If $X$ is injective, \eqref{eq:restriction} is surjective. Exactness of $D_Q$ makes its dual injective. Hence
	\[
	L\otimes_A D_Q(X)\longrightarrow A\otimes_A D_Q(X)
	\]
	is monic for every right ideal $L$. The ideal criterion for flatness \cite[Chapter~3]{EnochsJenda2000} shows that $D_Q(X)$ is flat. Since $A$ is left Artinian, it is left perfect by Bass's characterization \cite{Bass1960}; hence every flat left $A$-module is projective \cite[Theorem~28.4 and Corollary~28.8]{AndersonFuller1992}. This proves (i).
	
	Conversely, assume that $Q$ is an injective cogenerator and $D_Q(X)$ is flat. Then the tensor map associated to every right ideal $L$ is monic. Via \eqref{eq:ideal-dual}, this says that $D_Q(\Coker\rho_L)=0$. Faithfulness of $D_Q$ gives $\Coker\rho_L=0$. Thus every map $L\to X$ extends to $A$, and Baer's criterion makes $X$ injective \cite[Chapter~3]{EnochsJenda2000}. Part (iii) follows by taking $Q=E_R$ and using that $E_R$ is an injective cogenerator.
\end{proof}

\begin{remark}
	For a finitely generated right $A$-module $X$, Lemma~\ref{lem:artin-base-duality}(iii) gives $X\cong D^2X$. For an arbitrary module this reflexivity need not hold; Lemma~\ref{lem:artin-inj-proj}, rather than bidual reflexivity, is the correct tool for the arbitrary-module form of the theorem below.
\end{remark}

\subsection{Classical envelope and cover criteria}
\begin{lemma}\label{lem:artin-socle-criterion}
	Let $i:M\to I$ be a morphism of right $A$-modules. Then $i$ is an injective envelope if and only if $I$ is injective, $i$ is monic, and
	\begin{equation}\label{eq:soclecriterion}
		\Soc_A(i):\Soc_A M\longrightarrow\Soc_A I
	\end{equation}
	is an isomorphism.
\end{lemma}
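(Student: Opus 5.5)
The plan is to mirror the proof of Proposition~\ref{prop:local-inj}, replacing the local socles $S_{\mathfrak p}$ by the single global socle $\Soc_A$. This is possible because over an Artin algebra $\overline A=A/J$ is semisimple and $J$ is nilpotent.

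\emph{Forward direction.} Assume $i$ is an injective envelope. Then $I$ is injective and $i$ is monic, so left exactness of $\Hom_A(\overline A,-)$ makes $\Soc_A(i)$ injective. For surjectivity, take $x\in I$ with $xJ=0$. The submodule $xA$ is a quotient of $\overline A$, hence semisimple. Since $i(M)$ is essential in $I$, $xA\cap i(M)$ is essential in $xA$. A semisimple module has no proper essential submodule (the argument used in Proposition~\ref{prop:local-inj}), so $xA\subseteq i(M)$. Thus $\Soc_A I=i(\Soc_A M)$.

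\emph{Converse.} Assume $I$ is injective, $i$ is monic, and $\Soc_A(i)$ is an isomorphism. I would show directly that $i(M)$ is essential in $I$.

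\begin{itemize}
\item Let $0\neq U\subseteq I$ be a submodule and pick $0\neq u\in U$.
\item Since $J$ is nilpotent, choose $m\ge0$ maximal with $uJ^m\neq0$. Then $uJ^m\subseteq U$ is a nonzero submodule annihilated by $J$, so $0\neq uJ^m\subseteq\Soc_A I$.
\item Surjectivity of $\Soc_A(i)$ gives $\Soc_A I\subseteq i(M)$. Hence $0\ne uJ^m\subseteq U\cap i(M)$.
\end{itemize}

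Therefore $i(M)$ is essential in the injective module $I$, so $i$ is an injective envelope. Equivalently, one could argue as in Proposition~\ref{prop:local-inj}: split $I=u(E_A(M))\oplus I'$ and observe that a nonzero $I'$ would have nonzero socle, by the same nilpotence argument, which would contradict surjectivity.

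\emph{Main obstacle.} The only genuinely new point, compared with the Noether case, is that every nonzero module has nonzero socle. This is exactly where the nilpotence of $J$, coming from $A$ being Artinian, is used. It replaces the localization machinery and the indexing of indecomposable injectives by $\Spec A$. The rest of the argument is routine.
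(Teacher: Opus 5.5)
Your proposal is correct and follows essentially the same route as the paper. In the forward direction you use "a semisimple module has no proper essential submodule", applied to the cyclic module $xA$, where the paper applies it to simple submodules. In the converse you use the nilpotence of $J$ to produce the nonzero $J$-annihilated submodule $uJ^m\subseteq U$, which lies in $\Soc_A I\subseteq i(M)$; this is exactly the paper's essentiality argument.
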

\begin{proof}
	Identify $M$ with its image when $i$ is monic. If $M$ is essential in $I$, every simple submodule $S\subseteq I$ satisfies $S\cap M\ne0$, hence $S\subseteq M$. Therefore $\Soc_A I\subseteq M$ and \eqref{eq:soclecriterion} is an isomorphism.
	
	Conversely, suppose $I$ is injective, $i$ is monic, and $\Soc_A M=\Soc_A I$. Every nonzero right $A$-module $N$ has nonzero socle. Indeed, choose $0\ne x\in N$ and choose $r$ maximal with $xJ^r\ne0$; then $xJ^r$ is a nonzero module annihilated by $J$, hence a nonzero module over the semisimple ring $A/J$, and it contains a simple submodule. Thus every nonzero submodule $N\subseteq I$ contains a simple submodule of $I$. That simple module lies in $\Soc_A I=\Soc_A M\subseteq M$, so $N\cap M\ne0$. Hence $M$ is essential in $I$, and $i$ is an injective envelope.
\end{proof}

\begin{lemma}\label{lem:artin-top-criterion}
	Let $f:P\to N$ be a morphism of left $A$-modules. Then $f$ is a projective cover if and only if $P$ is projective, $f$ is epic, and
	\begin{equation}\label{eq:topcriterion}
		\Top_A(f):P/JP\longrightarrow N/JN
	\end{equation}
	is an isomorphism.
\end{lemma}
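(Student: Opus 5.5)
The plan is the standard Nakayama argument for Artin algebras, using that $J$ is nilpotent; the key fact is that $JY\ne Y$ for every nonzero left $A$-module $Y$, with no finite generation needed. If $Y\ne0$ and $J^n=0$, choose $r$ maximal with $J^rY\ne0$. Should $JY=Y$ hold, then $J^rY=J^{r}(JY)=J^{r+1}Y=0$, a contradiction. Hence $JY\ne Y$, and equivalently every proper submodule $K\subsetneq Y$ satisfies $K+JY\ne Y$ (apply the fact to $Y/K$). So $JY\smallin Y$ for arbitrary $Y$.

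\emph{Forward direction.} Suppose $f$ is a projective cover, so $P$ is projective, $f$ is epic, and $\Ker f\smallin P$. Right exactness of $\overline A\otimes_A-$ makes $\Top_A(f)$ surjective. Its kernel is $(\Ker f+JP)/JP$. I will show $\Ker f\subseteq JP$. Because $A/J$ is semisimple, $P/JP$ is semisimple. Write $(\Ker f+JP)/JP$ as a summand of $P/JP$ with complement $L/JP$. Then $\Ker f+L=P$, and smallness gives $L=P$, so $\Ker f\subseteq JP$ and $\Top_A(f)$ is injective. This uses only the definition of superfluous submodule, not the small-kernel formulation of minimality. If the paper's definition of projective cover is right minimality, I will first translate it: for a projective $P$ with epic $f$, right minimality is equivalent to $\Ker f\smallin P$. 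One direction splits off a redundant summand via a splitting argument analogous to Lemma~\ref{lem:splitting}. The other uses that an endomorphism $u$ with $fu=f$ satisfies $P=u(P)+\Ker f$. Alternatively, I will just cite \cite[Chapter~27]{AndersonFuller1992}.

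\emph{Converse.} Assume $P$ is projective, $f$ is epic, and $\Top_A(f)$ is an isomorphism. Injectivity of $\Top_A(f)$ gives $\Ker f\subseteq JP$. By the first paragraph, $JP\smallin P$, and a submodule of a superfluous submodule is superfluous. Hence $\Ker f\smallin P$ and $f$ is a projective cover. If right minimality is the working definition, I will check it directly. Given $u\in\End_A(P)$ with $fu=f$, we get $P=u(P)+\Ker f$, so $u$ is surjective. Then $u$ splits since $P$ is projective, so $P=\Ker u\oplus P'$. Also $\Ker u\subseteq\Ker f\subseteq JP$, and $J(\Ker u\oplus P')=J\Ker u\oplus JP'$ forces $\Ker u=J\Ker u$. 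By nilpotence, $\Ker u=0$.

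The main obstacle is that $N$ and $P$ need not be finitely generated, so the finite-length Nakayama lemma is unavailable. Nilpotence of $J$, which comes from $A$ being Artinian, replaces it throughout, and everything else is formal.
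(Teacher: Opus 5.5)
Your proof is correct and follows essentially the same route as the paper: nilpotence of $J$ gives $JP\smallin P$, the kernel of $\Top_A(f)$ is $(\Ker f+JP)/JP$, a complement in the semisimple module $P/JP$ forces $\Ker f\subseteq JP$, and right minimality is matched with $\Ker f\smallin P$. Your use of $\Ker u=J\Ker u$ in place of ``a superfluous direct summand is zero'' is an equivalent variant. The one soft spot is your splitting sketch for right minimality $\Rightarrow$ $\Ker f\smallin P$, which tacitly needs a superfluous-kernel projective cover of $N$ to exist; the paper avoids this by a direct argument: if $\Ker f+L=P$, lift $f$ through the epimorphism $f|_L$ to $v:P\to L$, then apply right minimality to $jv$, where $j:L\hookrightarrow P$ is the inclusion.
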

\begin{proof}
	Let $K=\Ker f$ and assume first that $P$ is projective and $f$ is epic. We recall two elementary facts.
	
	First, $f$ is a projective cover if and only if $K\smallin P$. If $K\smallin P$ and $u\in\End_A(P)$ satisfies $fu=f$, then $P=\im u+K$, so $u$ is epic. Projectivity of $P$ splits $u$, giving $P=\Ker u\oplus P'$ for a submodule $P'$. Moreover, $\Ker u\subseteq K$; a superfluous direct summand must be zero. Thus $u$ is an automorphism. Conversely, if $f$ is right minimal and $K+L=P$, then $f|_L:L\to N$ is epic. Projectivity gives $v:P\to L$ with $(f|_L)v=f$. If $j:L\hookrightarrow P$ is inclusion, then $f(jv)=f$; right minimality makes $jv$ invertible, and therefore $L=P$. Hence $K\smallin P$.
	
	Second, $JP\smallin P$. If $L+JP=P$, then $P/L=J(P/L)$. Iterating and using $J^n=0$ for some $n$ gives $P/L=J^n(P/L)=0$, so $L=P$.
	
	The map in \eqref{eq:topcriterion} is always epic, and its kernel is
	\begin{equation}\label{eq:topkernel}
		(K+JP)/JP,
	\end{equation}
	because $f(JP)=Jf(P)=JN$. Consequently \eqref{eq:topcriterion} is an isomorphism exactly when $K\subseteq JP$. If this inclusion holds, then $K\smallin P$ because $JP\smallin P$, and $f$ is a projective cover.
	
	For the converse, suppose $f$ is a projective cover. Then $K\smallin P$. If the image $(K+JP)/JP$ in the semisimple module $P/JP$ were nonzero, choose a complementary submodule $\overline L$ with
	\[
	P/JP=((K+JP)/JP)\oplus\overline L.
	\]
	Let $L$ be the inverse image of $\overline L$ in $P$. Then $K+L=P$, while $L\ne P$, contradicting $K\smallin P$. Hence $K\subseteq JP$, so \eqref{eq:topcriterion} is an isomorphism.
\end{proof}

\subsection{The Artin-algebra form of the main theorem for arbitrary modules}
\begin{theorem}[Artin-algebra form of Theorem~\ref{thm:main}]\label{thm:artin-main}
	Let $R$ be a commutative Artinian ring, let $A$ be an Artin $R$-algebra, and let $i:M\to I$ be a morphism of arbitrary right $A$-modules. Let $D=\Hom_R(-,E_R)$ with $E_R=E_R(R/J(R))$. The following are equivalent:
	\begin{enumerate}[label=(\arabic*)]
		\item $i$ is an injective envelope;
		\item $D(i):D(I)\to D(M)$ is a projective cover;
		\item for some injective cogenerator $Q$ of $\Mod R$, the morphism $D_Q(i):D_Q(I)\to D_Q(M)$ is a projective cover;
		\item for every injective $R$-module $Q$, the morphism $D_Q(i)$ is a projective cover.
	\end{enumerate}
\end{theorem}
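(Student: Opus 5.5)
The plan is to prove $(1)\Rightarrow(4)\Rightarrow(2)$, $(4)\Rightarrow(3)$, and $(2)\Rightarrow(1)$, $(3)\Rightarrow(1)$. Everything runs through the classical criteria of Lemmas~\ref{lem:artin-socle-criterion} and \ref{lem:artin-top-criterion}, the socle--top formula of Lemma~\ref{lem:artin-soc-top}, and the injective--projective correspondence of Lemma~\ref{lem:artin-inj-proj}. The trivial implications $(4)\Rightarrow(2)$ and $(4)\Rightarrow(3)$ hold because $E_R$ is an injective cogenerator. Since $(2)$ is a special case of $(3)$ (take $Q=E_R$), it suffices to prove $(1)\Rightarrow(4)$ and $(3)\Rightarrow(1)$.

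For $(1)\Rightarrow(4)$, let $Q$ be an arbitrary injective $R$-module and $i$ an injective envelope. By Lemma~\ref{lem:artin-socle-criterion}, $I$ is injective, $i$ is monic, and $\Soc_A(i)$ is an isomorphism. Lemma~\ref{lem:artin-inj-proj}(i) gives that $D_Q(I)$ is projective, and exactness of $D_Q$ (Lemma~\ref{lem:artin-base-duality}(i)) makes $D_Q(i)$ epic. The naturality of \eqref{eq:artin-soc-top-dual} yields a commutative square identifying $\Top_A(D_Q(i))$ with $D_Q(\Soc_A(i))$, which is an isomorphism. Lemma~\ref{lem:artin-top-criterion} then says that $D_Q(i)$ is a projective cover. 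Note that this criterion requires no cogenerator hypothesis, since we only use the direction ``$\Top$ iso $\Rightarrow$ cover''.

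For $(3)\Rightarrow(1)$, let $Q$ be an injective cogenerator with $D_Q(i)$ a projective cover. Then $D_Q(I)$ is projective, hence flat, so $I$ is injective by Lemma~\ref{lem:artin-inj-proj}(ii). Since $D_Q(i)$ is epic, we have $D_Q(\Ker i)\cong\Coker D_Q(i)=0$, and $\Ker i=0$ by Lemma~\ref{lem:artin-base-duality}(ii). By Lemma~\ref{lem:artin-top-criterion}, $\Top_A(D_Q(i))$ is an isomorphism, hence $D_Q(\Soc_A(i))$ is an isomorphism by Lemma~\ref{lem:artin-soc-top}. Since $D_Q$ reflects isomorphisms (Lemma~\ref{lem:artin-base-duality}(ii)), $\Soc_A(i)$ is an isomorphism, and Lemma~\ref{lem:artin-socle-criterion} gives $(1)$.

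There is no serious obstacle. The one point to check carefully is that the square coming from Lemma~\ref{lem:artin-soc-top} commutes with the correct variance, that is, that $\Theta$ is natural in $X$ contravariantly on both sides. This follows directly from the evaluation formula \eqref{eq:artin-theta}. One should also note that $\Top_A(D_Q(i))$ is a map $D_Q(I)/JD_Q(I)\to D_Q(M)/JD_Q(M)$, matching \eqref{eq:topcriterion} with $P=D_Q(I)$ and $N=D_Q(M)$. Alternatively, the whole theorem could be deduced from Theorem~\ref{thm:main}, using that flat equals projective over the left perfect ring $A$ and that flat and projective covers then coincide; but the classical route above is self-contained, as the appendix intends.
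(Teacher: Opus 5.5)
Your proposal is correct and follows the paper's proof essentially step for step. You use the same cycle $(1)\Rightarrow(4)\Rightarrow(2)\Rightarrow(3)\Rightarrow(1)$: $(1)\Rightarrow(4)$ comes from the socle criterion, Lemma~\ref{lem:artin-inj-proj}(i), the naturality square of Lemma~\ref{lem:artin-soc-top} and the top criterion, and $(3)\Rightarrow(1)$ runs the same square backwards, using that the exact faithful functor $D_Q$ reflects isomorphisms and zero objects.
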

\begin{proof}
	Assume (1), and let $Q$ be any injective $R$-module. Exactness of $D_Q$ makes $D_Q(i)$ epic. By Lemma~\ref{lem:artin-inj-proj}(i), $D_Q(I)$ is projective. Lemma~\ref{lem:artin-socle-criterion} makes $\Soc_A(i)$ an isomorphism. Naturality of Lemma~\ref{lem:artin-soc-top} gives a commutative square
	\[
	\begin{CD}
		\Top_A(D_QI) @>{\cong}>> D_Q(\Soc_A I)\\
		@V{\Top_A(D_Qi)}VV @VV{D_Q(\Soc_A i)}V\\
		\Top_A(D_QM) @>{\cong}>> D_Q(\Soc_A M).
	\end{CD}
	\]
	The right vertical map is an isomorphism, so the left vertical map is an isomorphism. Lemma~\ref{lem:artin-top-criterion} shows that $D_Q(i)$ is a projective cover. This proves $(1)\Rightarrow(4)$.
	
	The implication $(4)\Rightarrow(2)$ follows by taking $Q=E_R$, and (2) implies (3) by the same choice because $E_R$ is an injective cogenerator. It remains to prove $(3)\Rightarrow(1)$. Let $Q$ be an injective cogenerator and assume $D_Q(i)$ is a projective cover. Its source is projective, hence flat, so Lemma~\ref{lem:artin-inj-proj}(ii) implies that $I$ is injective. Since $D_Q(i)$ is epic and $D_Q$ is exact,
	\[
	D_Q(\Ker i)=0.
	\]
	Faithfulness of $D_Q$ gives $\Ker i=0$, so $i$ is monic.
	
	By Lemma~\ref{lem:artin-top-criterion}, $\Top_A(D_Qi)$ is an isomorphism. The naturality square above and Lemma~\ref{lem:artin-soc-top} imply that $D_Q(\Soc_A i)$ is an isomorphism. Exactness and faithfulness of $D_Q$ reflect isomorphisms by Lemma~\ref{lem:artin-base-duality}(ii), so $\Soc_A(i)$ is an isomorphism. The socle criterion, Lemma~\ref{lem:artin-socle-criterion}, now proves that $i$ is an injective envelope.
\end{proof}

For finitely generated modules the preceding theorem admits the customary purely categorical Artin-algebra proof.

\begin{corollary}[Classical finite-length proof]\label{cor:finite-artin}
	Let $M$ and $I$ be finitely generated right $A$-modules. Then
	\begin{equation}\label{eq:finite-artin-cover}
		i:M\longrightarrow I\text{ is an injective envelope}
		\quad\Longleftrightarrow\quad
		D(i):D(I)\longrightarrow D(M)\text{ is a projective cover}.
	\end{equation}
\end{corollary}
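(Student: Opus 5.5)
The equivalence in Corollary~\ref{cor:finite-artin} follows directly from Theorem~\ref{thm:artin-main} (the equivalence of (1) and (2) there), since that theorem already covers arbitrary modules. The point of the corollary is to give the classical, purely categorical argument for finite-length modules. That argument uses the exact duality $D:\mathrm{mod}\,A\to(A\text{-}\mathrm{mod})^{\op}$ of Lemma~\ref{lem:artin-base-duality}(iii) instead of the socle and top criteria. I would present it as follows, while noting that the citation of Theorem~\ref{thm:artin-main} suffices logically.

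First I would record what $D$ does on finite-length modules. It is an exact anti-equivalence with quasi-inverse $D$ (now on $A$-modules of the other side), because $\delta$ is a natural isomorphism on finite-length modules. Hence it interchanges monomorphisms and epimorphisms. By Lemma~\ref{lem:artin-inj-proj}(iii), together with $X\cong D^2X$ for the converse direction on the left side, it also interchanges injective objects of $\mathrm{mod}\,A$ and projective objects of $A\text{-}\mathrm{mod}$. Injectives of $\mathrm{mod}\,A$ are injective in $\Mod A$ because $A$ is right noetherian, via Baer's criterion.

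Next I would characterize both notions categorically inside the finite-length categories. For $i$ with $I$ injective and $i$ monic, $i$ is an envelope if and only if it is left minimal, meaning every $a\in\End_A(I)$ with $ai=i$ is an automorphism. This is the argument at the end of the proof of Theorem~\ref{thm:main}, together with its converse: if $M$ is essential and $ai=i$, then $\Ker a\cap M=0$, so $a$ is monic, and it splits, which forces $a$ to be an isomorphism since $\operatorname{im}a\supseteq M$ is an essential summand. Dually, $f$ with $P$ projective and $f$ epic is a projective cover if and only if it is right minimal, by the first part of the proof of Lemma~\ref{lem:artin-top-criterion}. Envelopes and covers of finitely generated modules stay finitely generated: $E_A(M)$ embeds in $E_A$ of the socle, a finite sum of indecomposable injectives $D(Ae)$ of finite length. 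So the test endomorphisms $a$ live in the finite-length category.

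Finally I would transport these characterizations through $D$. The data ``$I$ injective, $i$ monic, $ai=i$'' correspond bijectively to ``$D(I)$ projective, $D(i)$ epic, $D(i)D(a)=D(i)$''. Fullness and faithfulness of $D$ give $\End_A(I)\cong\End_A(D I)^{\op}$, with automorphisms corresponding to automorphisms. Left minimality of $i$ is therefore exactly right minimality of $D(i)$. The only mildly delicate step is ensuring that every endomorphism of $D(I)$ is of the form $D(a)$; this is fullness, which comes from $D^2\cong1$ on finite-length modules. That step is where finite generation is genuinely used.
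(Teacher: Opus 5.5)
Your argument is correct and is essentially the paper's proof. Both use that $D$ is an exact, full and faithful duality on finite-length modules with $D^2\cong 1$, interchanging injectives with projectives and monomorphisms with epimorphisms, and then transport left minimality of $i$ to right minimality of $D(i)$ by writing every $u\in\End(D(I))$ as $D(a)$. Your extra checks (that an injective envelope is left minimal, and that Theorem~\ref{thm:artin-main} already implies the corollary) are sound; the remark that envelopes of finitely generated modules stay finitely generated is unnecessary, since $I$ is assumed finitely generated.
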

\begin{proof}
	By Lemma~\ref{lem:artin-base-duality}(iii),
	\[
	D:\mathrm{mod}\,A\xrightarrow{\ \sim\ }(A\text{-}\mathrm{mod})^{\op}
	\]
	is exact, full, faithful, and satisfies $D^2\cong1$. It sends injective objects to projective objects and monomorphisms to epimorphisms.
	
	Suppose $i$ is an injective envelope and $u\in\End_{A^{\op}}(D(I))$ satisfies $D(i)u=D(i)$. Fullness gives a unique $a\in\End_A(I)$ with $u=D(a)$. Faithfulness transforms the displayed identity into $ai=i$. Left minimality of $i$ makes $a$ invertible, hence $u=D(a)$ is invertible. Therefore $D(i)$ is a projective cover.
	
	Conversely, if $D(i)$ is a projective cover, exactness and $D^2\cong1$ show that $i$ is a monomorphism into an injective module. If $a\in\End_A(I)$ satisfies $ai=i$, then $D(i)D(a)=D(i)$; right minimality makes $D(a)$ invertible. Faithfulness and $D^2\cong1$ make $a$ invertible. Thus $i$ is left minimal, and a left-minimal monomorphism into an injective module is an injective envelope.
\end{proof}

\end{document}